\let\myacute=\'
\def\<{\langle}
\def\>{\rangle}
\def \begindm {\begin{displaymath}}
\def \enddm {\end{displaymath}}
\newtheorem{thm}{Theorem}[section]
\newtheorem{lemma}{Lemma}[section]
\newtheorem{claim}{Claim}[section]
\numberwithin{equation}{section}
\long\def\symbolfootnote[#1]#2{\begingroup\def\thefootnote{\fnsymbol{footnote}}\footnote[#1]{#2}\endgroup}
\title[Solvability in groups with chain conditions]
{Solvability in groups with a chain condition on uniformly definable subgroups}
\author[J. Derakhshan]{Jamshid Derakhshan}
\address{St Hilda's College, University of Oxford, Cowley Place, Oxford OX4 1DY, and Mathematical Institute, Oxford, OX2 6GG, UK}
\email{derakhsh@maths.ox.ac.uk}
\begin{document}

\keywords{}

\subjclass[2000]{Primary}

\begin{abstract} 
We prove a structure theorem for periodic locally soluble groups satisfying a chain condition on intersections of relatively uniformly 
definable subgroups using results from the theory of stable groups. The result in particular shows that these groups are soluble, thus giving a model-theoretic and much simpler proof of a special case of a theorem of Bryant and Hartley on the solubility of periodic locally soluble groups satisfying the minimal condition on centralizers.
\end{abstract}

\maketitle

\section{\bf Introduction}\label{sec-introduction}

A definable subgroup of a group $G$ is one which consists of all 
$g\in G$ such that $\phi(x,\bar a)$ holds for some first-order 
formula $\phi(\bar x,\bar y)$ from the language of group thryeo, and parameters $\bar a$ 
from $G$, and is denoted by $\phi(G,\bar a)$. 
If $H$ is a subgroup of $G$, then a relatively definable subgroup 
of $H$ is defined as the intersection of a definable subgroup of $G$ with $H$ 
(we also say definable relative to $H$). 
A family of subgroups $H_i$ of $G$ is called uniformly definable 
if for some formula $\phi(\bar x,\bar y)$ and parameters $\bar a_i$, 
each $H_i$ is defined by $\phi(\bar x,\bar a_i)$ with parameters $\bar a_i$. Similarly one 
defines uniformly relatively definable subgroups of the subgroup $H$. 

The chain condition on intersections of uniformly definable 
subgroups (which we call icc) is the condition that any chain of subgroups of the 
form $\bigcap_i \phi(G,\bar a_i)$ has length bounded by some function of 
$\phi(\bar x,\bar y)$. In we replace the definable groups $\phi(G,\bar a)$ 
with relatively definable subgroups, we obtain the chain 
condition on intersections of uniformly relatively definable 
subgroups (which we call sub-icc). 

An icc-group is defined as a group which satisfies the icc chain condition. A group 
isomorphic to a subgroup of an icc-group shall be called a sub-icc 
group. It turns out that a sub-icc group satisfies the 
sub-icc chain condition.

If $G$ is an icc-group and $H$ a 
subgroup, then 
we shall be considering relatively definable subgroups of $H$. This 
refers to the ambient icc supergroup $X$. If $D$ is 
a relatively definable subgroup of $H$, then a relatively definable 
subgroup of $D$ also is determined by the same icc supergroup, and 
indeed it is the intersection with $D$ of a definable subgroup of $X$.

The above conditions occur naturally in model theory of groups and infinite group theory. Sub-icc groups 
are mc-groups, namely, groups that satisfy the minimal 
condition on centralizers. This condition holds in linear groups (over an arbitrary field) and in many other (non-linear) groups.  

Stable groups, which are defined groups whose first-order theory is stable are icc by the 
Baldwin--Saxl condition (cf. [6]), thus subgroups
of stable groups (which are called substable groups) are sub-icc. Note that centralizers 
of a family of subsets of a group are uniformly definable. For details see [6]. 
Stable groups play an important role in stability theory and model theory.

Our main result is the following. Let us recall some definitions and notation. The Hirsch--Plotkin radical 
of a group $G$, denoted $HP(G)$, is the largest locally nilpotent normal subgroup. In 
every group the product of two locally nilpotent normal subgroups is 
locally nilpotent. Hence every group has a Hirsch--Plotkin radical.  
Let $\pi$ denote a set of primes. 
An element of finite order in a group is a $\pi$-element if the prime
divisors of its order are in $\pi$.  
A periodic group $G$ is a $\pi$-group if it consists of
$\pi$-elements. 
The maximal normal $\pi$-subgroup of a group $G$ is
denoted by $O_{\pi}(G)$.

\begin{thm} Let $G$ be a countable periodic locally soluble 
sub-icc group. Let $\Pi^*$ denote the set of odd primes. Then $G$ is 
soluble and moreover the following hold:
\itemize
\item There is a relatively definable soluble supergroup $D_1$ 
of the Hirsch--Plotkin radical of $G$ such that for any $p\in \Pi^*$, 
any $p$-subgroup of $G_1:=G/D_1$ satisfies the minimal condition on 
subgroups,
\item For any $p\in \Pi^*$, there is a normal subgroup $H_p$ containing 
the $p'$-group $O_{p'}(G_1)$ which has finite index in $G$ and such that 
$H_p/O_{p'}(G_1)$ is a divisible abelian group,
\item The intersection $I:=\bigcap_{p\in \Pi^*} H_p$ 
is a soluble group containing 
all the divisible subgroups of $G_1$ such that for all $p\in \Pi^*$, 
all the Sylow-$p$ subgroups of $G_1/I$ are finite,
\item There is a relatively definable soluble normal subgroup of $G_1$ 
containing $I$ such that the quotient $G_2:=G_1/D_2$ has a normal 
relatively definable soluble subgroup $J$ with the property that all 
the Sylow $2$-subgroups of $G_2/J$ are finite,
\item The sub-icc quotient $G_2/J$ is a virtually abelian group such that 
all its Sylow $p$-subgroups satisfy the minimal condition on subgroups 
for all primes $p$,
\item Any subgroup of a homomorphic image of $G_1$ whose 
Sylow $2$-subgroups satisfy the minimal condition is virtually 
abelian and $G$ is soluble.\end{thm}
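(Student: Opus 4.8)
\noindent\emph{Outline of the argument.}

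The plan is to build a tower of quotients $G \twoheadrightarrow G_1 = G/D_1 \twoheadrightarrow G_2 = G_1/D_2 \twoheadrightarrow G_2/J$, dividing out at each step a \emph{relatively definable} soluble normal subgroup ($D_1$, then $D_2$, then $J$), arranged so that the locally nilpotent part of the group shrinks until $G_2/J$ is visibly virtually abelian; solubility of $G$ then follows by reassembling the extensions from the bottom up. Two soft consequences of the sub-icc hypothesis carry the load. First, a sub-icc group is an mc-group, so for every subset $S$ the centraliser $C_G(S) = \bigcap_{s \in S} C_G(s)$ is an intersection of uniformly relatively definable subgroups and hence, by the sub-icc chain condition, equals a finite subintersection and is itself relatively definable; iterating the double-centraliser operator then produces relatively definable ``hulls'' of arbitrary subgroups. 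Second, each relatively definable subgroup $H$ has a relatively definable connected component $H^{0}$ — the intersection of its relatively definable subgroups of finite index, which again collapses to a finite subintersection and is normal of finite index in $H$; and since relative definability is measured against the fixed ambient icc group, all of this is inherited by $G_1$ and $G_2$, so the constructions persist through the tower.

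I would first treat $HP(G)$. As a periodic locally nilpotent mc-group it splits as the direct product of its Sylow subgroups, each a locally finite $p$-group with the minimal condition on centralisers, hence Chernikov (Shunkov); together with the chain conditions this makes $HP(G)$ soluble, and choosing a maximal relatively definable soluble normal subgroup inside its double-centraliser hull yields the required $D_1 \supseteq HP(G)$. In $G_1 = G/D_1$ an infinite odd $p$-subgroup would contribute a locally nilpotent normal part not absorbed by $D_1$, so every odd $p$-subgroup of $G_1$ is Chernikov and thus satisfies the minimal condition on subgroups — item~(1). For a fixed odd $p$ the Sylow $p$-subgroups of $G_1/O_{p'}(G_1)$ are therefore Chernikov while $O_{p'}$ of this quotient is trivial; the mc-structure theory makes $G_1/O_{p'}(G_1)$ itself Chernikov, and taking $H_p$ to be the preimage of its connected — divisible abelian — component gives item~(2). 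Intersecting, $I = \bigcap_{p \in \Pi^*} H_p$ is soluble (its derived subgroup lands in $\bigcap_p O_{p'}(G_1)$, a sub-icc locally finite $2$-group, hence Chernikov), it contains every divisible subgroup of $G_1$ (such a subgroup maps trivially into each finite quotient $G_1/H_p$), and every Sylow $p$-subgroup of $G_1/I$ is Chernikov with trivial divisible part, hence finite — item~(3).

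The prime $2$ is then handled by a second pass of the same machinery: one extracts a relatively definable soluble normal subgroup $D_2 \supseteq I$ of $G_1$ and a relatively definable soluble normal subgroup $J$ of $G_2 = G_1/D_2$ for which the Sylow $2$-subgroups of $G_2/J$ are finite (item~(4)). At that point $G_2/J$ is a periodic locally soluble sub-icc group all of whose Sylow subgroups satisfy the minimal condition — the odd ones by item~(1) applied to this homomorphic image of $G_1$, the $2$-ones because they are finite — and the structure theory of such groups makes it virtually abelian with every Sylow subgroup satisfying the minimal condition, which is item~(5); this is exactly the general statement of item~(6), applied with the trivially satisfied ``min'' on the finite Sylow $2$-subgroups of $G_2/J$. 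Finally solubility of $G$ is read off by climbing the tower: $G_2/J$ is virtually abelian, hence soluble, so $G_2$ is soluble-by-soluble and soluble; therefore $G_1$ (soluble $D_2$ on top of $G_2$) and then $G$ (soluble $D_1$ on top of $G_1$) are soluble.

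I expect the prime $2$ to be the genuine obstacle, and that is why it is quarantined in items~(4)--(5) rather than absorbed into the uniform treatment of the odd primes. For odd $p$, the passage from ``Chernikov Sylow $p$-subgroups'' to a \emph{divisible abelian} normal complement over $O_{p'}(G_1)$ relies on coprime-action phenomena — a $p$-element acting with enough fixed-point control on a $p'$-section — which have no counterpart at $p = 2$, where an involution can invert a divisible abelian group. Trapping the $2$-part inside the relatively definable soluble subgroups $D_2$ and $J$, and verifying that the residue $G_2/J$ is genuinely small, is precisely where the soft tools above — definable centraliser hulls and connected components in place of delicate combinatorial group theory — do the work, and it is this substitution that makes the argument much simpler than the original proof of Bryant and Hartley.
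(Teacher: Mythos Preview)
Your outline has the right architecture, but three load-bearing steps are either false or missing. First, the claim that each Sylow $p$-subgroup of $HP(G)$ is Chernikov is wrong: an infinite elementary abelian $p$-group is abelian (hence trivially mc) but does not satisfy min. The paper obtains solubility of $HP(G)$ from Lemma~2.4 (Bryant's $Z_2\setminus Z_1$ argument), not from any Chernikov decomposition. Second, your argument for item~(1) --- that an infinite odd $p$-subgroup of $G_1$ ``would contribute a locally nilpotent normal part not absorbed by $D_1$'' --- is a non-sequitur: a $p$-subgroup is locally nilpotent but need not be normal, and even triviality of $HP(G_1)$ would not bound arbitrary $p$-subgroups. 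The paper instead invokes Hartley's theorem (Lemma~2.7, resting on~[4]) that in a periodic locally soluble mc-group the $p$-subgroups of $G/O_p(G)$ satisfy min for \emph{odd} $p$; this is a genuine external input, not derivable from the sub-icc machinery alone, and it is exactly where the restriction to $\Pi^*$ enters.

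Third, and most seriously, item~(4) is where the real work lies and ``a second pass of the same machinery'' does not touch it. Hartley's theorem fails for $p=2$, so the odd-prime argument cannot simply be repeated. The paper instead writes the group as an ascending union of $\{2,3,\dots,p\}$-subgroups $U_p$ built from a Sylow system, uses item~(3) together with \v{C}ernikov's finite-Sylow theorem (Theorem~2.9) to show that $O_2(U_p)$ has finite index in $U_p$, envelopes the $O_2(U_p)$ in a \emph{uniformly} relatively definable family $\{D_p\}$ of $n$-soluble groups (Lemma~2.5), and then applies the sub-icc chain condition to stabilise $\widehat{S_2}\cap\bigcap_p D_p$ at finite index in the Sylow $2$-subgroup $\widehat{S_2}$; only after this does the normal closure land inside a relatively definable soluble $J$ with finite Sylow $2$-subgroups in the quotient. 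This Sylow-system construction is the technical heart of the proof and is not a repetition of anything done for the odd primes --- it is precisely the replacement for Hartley's theorem at $p=2$ that your outline acknowledges is needed but does not supply.
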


In [1], Bryant and Hartley proved that a periodic locally soluble mc group is soluble. Their proof uses various works on finite and infinite 
groups, and a deep result of John Thompson on bounding the Fitting height of a finite soluble group. Our result gives a new and much simpler proof of the sub-icc case of the 
Bryant-Hartley theorem using the machinery of stable group theory. Our proof uses only the basic structure 
theory of locally finite groups and of stable groups. 

Our methods will hopefully initiate a study of non-periodic locally soluble stable (or sub-icc) groups, of which nothing is known at present. In [3] we proved
several structure theorems for locally nilpotent stable and sub-icc groups. The methods of this paper should hopefully give some ideas on the soluble analogue of those results. We pose two basic 
problems in this respect. 

Recall that a group is called hypercentral if it posses a transfinite central series of 
subgroups. In [3] it was shown that periodic locally nilpotent sub-icc groups 
are hypercentral. A group is called hyperabelian if it has a transfinite 
chain of subgroups each normal in the next and each successive quotient 
is abelian. 

\

{\bf Problem 1.} Is a locally soluble sub-icc group hyperabelian?

\

Positive solution would yield positive solution to the following using standard results:

\

{\bf Problem 2.} Does a locally soluble sub-icc 
group contain a largest locally soluble normal subgroup (i.e. a locally soluble radical)?

We remark that in contrast to nilpotency, product of two locally 
soluble normal subgroups need not be locally soluble.

\section{Sub-icc groups, local solubility, and local finiteness}

\begin{lemma} Let $G$ be a group.
\itemize
\item Suppose that $G$ is an icc group. Let $H$ be a definable
subgroup of $G$. Then the conjugate $H^g$ (where $g\in G$) and 
the normalizer $N_{G}(H)$ are definable. Let $A$ be an arbitrary
subset of $G$. Then the centralizer $C_G(A)$ is definable.
\item Suppose that $G$ is a sub-icc group. Let $H$ be a relatively definable
subgroup of $G$. Then the conjugate $H^g$
(where $g\in G$) and 
the normalizer $N_{G}(H)$ are also definable relative
to $G$. Let $A$ be a 
subset of $G$. Then the centralizer $C_H(A)$ is 
definable relative to $G$.\end{lemma}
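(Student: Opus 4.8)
The plan is to treat the three assertions of each bullet separately and to isolate the single place where the chain condition is actually used. For the first bullet, conjugates and normalizers need no hypothesis at all: if $H=\phi(G,\bar a)$ and we write $\psi(x;y,\bar z)$ for the formula $\phi(yxy^{-1},\bar z)$, then $H^{g}=\psi(G;g,\bar a)$, so the $H^{g}$ form a uniformly definable family; and $N_G(H)=\{g: g^{-1}Hg=H\}$ is defined by $\forall x\,(\phi(x,\bar a)\leftrightarrow\phi(g^{-1}xg,\bar a))$ with parameters $\bar a$. The real content is $C_G(A)$ for infinite $A$: here $C_G(A)=\bigcap_{a\in A}C_G(a)$, with $\{C_G(a):a\in G\}$ the uniformly definable family given by $xy=yx$. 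From icc I would extract, once and for all, the standard consequence that an intersection of members of a fixed uniformly definable family coincides with a finite sub-intersection: greedily build $C_G(a_1)\supseteq C_G(a_1)\cap C_G(a_2)\supseteq\cdots$, at each stage adjoining some $a\in A$ that strictly shrinks the intersection if one exists; by icc this terminates, and at termination no $a\in A$ shrinks it, i.e. the finite sub-intersection already lies in, hence equals, $\bigcap_{a\in A}C_G(a)$. Thus $C_G(A)=\bigcap_{i=1}^{n}C_G(a_i)$ is definable.

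For the second bullet, realize $G$ as a subgroup of an icc group $X$; then a subgroup relatively definable in $G$ has the form $H=D\cap G$ with $D=\phi(X,\bar a)$ definable in $X$, and \emph{definable relative to $G$} means of the form $D'\cap G$ with $D'$ definable in $X$. Conjugates are immediate: for $g\in G$ one has $H^{g}=D^{g}\cap G^{g}=D^{g}\cap G$, and $D^{g}$ is definable in $X$ by the first bullet, with formulas uniform in $g$. Centralizers go as before: since $a\in G$ gives $C_G(a)=C_X(a)\cap G$, we get $C_H(A)=\bigl(D\cap\bigcap_{a\in A}C_X(a)\bigr)\cap G$; the family $\{D\cap C_X(a):a\in X\}$ is uniformly definable in $X$, so by icc the intersection over $A$ is a finite sub-intersection, hence definable in $X$, and $C_H(A)$ is relatively definable. (Alternatively, invoke directly the sub-icc chain condition for $G$ noted in the introduction, applied to the uniformly relatively definable family $\{H\cap C_G(a):a\in G\}$.)

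The one genuinely non-formal point, and the step I expect to be the main obstacle, is the normalizer in the relative case: the naive guess $N_G(H)=N_X(D)\cap G$ is \emph{false} in general (take $X$ the infinite dihedral group $D_\infty$, $D$ a non-normal order-$2$ subgroup, $G$ the infinite cyclic subgroup; then $N_G(H)=G$ while $N_X(D)\cap G$ is trivial). The remedy is to replace $D$ by a better definable-in-$X$ representative of the same trace on $G$. Put
\[
D^{*}\;=\;\bigcap_{g\in N_G(H)} D^{g}.
\]
Since $\{D^{g}:g\in G\}$ is uniformly definable in $X$, icc forces $D^{*}=D^{g_1}\cap\cdots\cap D^{g_n}$ for some $g_i\in N_G(H)$, so $D^{*}$ is definable in $X$. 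As $D^{g}\cap G=H$ for every $g\in N_G(H)$, we get $D^{*}\cap G=H$; and right translation by any $h\in N_G(H)$ permutes the index set $N_G(H)$, so $D^{*}$ is normalized by $N_G(H)$, giving $N_G(H)\subseteq N_X(D^{*})$. Conversely any $g\in G$ normalizing $D^{*}$ normalizes $D^{*}\cap G=H$, so $N_X(D^{*})\cap G\subseteq N_G(H)$. Hence $N_G(H)=N_X(D^{*})\cap G$, and $N_X(D^{*})$ is definable in $X$ by the first bullet, so $N_G(H)$ is definable relative to $G$. The only care needed in the write-up is to state the "intersection equals a finite sub-intersection" consequence of icc cleanly once and then reuse it for $C_G(A)$, $C_H(A)$ and $D^{*}$.
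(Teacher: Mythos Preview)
Your proof is correct and follows essentially the same route as the paper's: the key construction $D^{*}=\bigcap_{g\in N_G(H)}D^{g}$ for the relative normalizer, and the reduction of centralizers to finite sub-intersections via icc, are exactly what the paper does. Your write-up is in fact a bit cleaner---the permutation-of-index-set argument and the observation $D^{*}\cap G=H$ dispatch the two inclusions more transparently than the paper's element chase---and the dihedral counterexample is a nice addition not present in the paper.
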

\begin{proof} If $H$ is defined by $\phi(x,\bar a)$ for some 
parameter $\bar a$ and
formula $\phi(x,\bar y)$, then $H^g$ is 
defined by $\phi(gxg^{-1},\bar a)$. The definability of $N_G(H)$ is clear
for a definable subgroup $H$. The family of centralizers 
$C_G(A)=\bigcap_{a\in A} C_G(a)$, where $A$ varies over arbitrary
subsets of $G$, is uniformly definable since centralizers 
$C_G(a_1,\dots,a_n)$ of finite sets are uniformly defined by the formula
$\bigwedge_{i=1}^nxa_i=a_ix$ and any centralizer is the 
centralizer of boundedly many elements by the icc. This proves the
first part. 

Let $X$ denote the icc-supergroup
of $G$. We first show the relative definability
of $N_G(H)$. As $H$ is 
definable relative to $G$, there is a definable subgroup $D$ of $X$
such that $H=D\cap G$. 
Let $I:=\bigcap_{g\in N_G(H)} D^g$. As $D$ and its conjugates are 
definable, $I$ is equal
to a finite sub-intersection $D^{g_1} \cap \dots \cap D^{g_l}$, 
where $g_1,\dots g_l \in N_G(H)$, 
by the icc chain condition in $X$. Hence 
$\bigcap_{g\in N_G(H)} D^g$ and hence the 
normalizer $N_G(\bigcap_{g\in N_G(H)} D^g)$ 
is relatively definable.
\begin{claim} $N_G(H)=N_G(\bigcap_{g\in N_G(H)}
D^g)$.\end{claim}  
\begin{proof} 
Let $a \in G$ be in the normalizer of $\bigcap_{g\in
N_G(H)} D^g$. We show that $H^a=H$.

Let $h \in H$. Then 
$h \in H \leq \bigcap_{g\in N_G(H)} D^g$. Hence
$h^a \in \bigcap_{g\in N_G(H)} D^g \leq D$.
But $h^a \leq G$. So $h^a \in D\cap G=H$ as $H=D\cap G$. 
Hence $H^a \leq H$. Now let $h \in H$. Then 
$$h \in H \leq \bigcap_{g\in N_G(H)}
D^g \leq (\bigcap_{g\in N_G(H)} D^g)^a \leq D^a.$$
So for some $d \in D$ we have $h=a^{-1}da$. Hence
$d=aha^{-1}$.  Thus $d \in G$ which implies that 
$d \in D\cap G$.
Again as $H$ is definable relative to $G$, we deduce that
$d \in H$. Therefore $d^a \in H^a$. But $d^a=h$, hence 
$h \in H^a$, so $H \leq H^a$.

Conversely, 
suppose $a \in N_G(H)$. We want to show that $a \in N_G(\bigcap_{g\in
N_G(H)} D^g)$. So let $b \in \bigcap_{g\in N_G(H)} D^g$.  
For any $g\in N_G(H)$, since 
$ga^{-1} \in N_G(H)$, one has 
$b \in D^{ga^{-1}}$.  
Hence $b^a \in {(D^{ga^{-1}})}^a=D^g$, and 
$b^a \in \bigcap_{g\in N_G(H)} D^g$.  As 
$\bigcap_{g\in N_G(H)} D^g\leq (\bigcap_{g\in N_G(H)} D^g)^a$, 
we conclude that 
$a$ normalises $\bigcap_{g\in N_G(H)} D^g$, completing the
proof of the claim.\end{proof}
The claim shows that $N_G(H)$ is relatively definable. 
It is clear that the conjugate $H^g$ is relatively definable. 
We now prove the relative definability of the centralizer $C_H(A)$.  

Note that $C_H(A)=C_{X}(A)\cap H$. But
As $H$ is definable relative to $G$, $H=D\cap G$
for some definable subgroup $D$ of $X$. Let $S:=C_{X}(A)\cap D$.  
As $X$ is icc, $C_{X}(A)$ is definable in 
$X$.  So $S$ is definable in $X$. 
But $C_H(A)=C_{X}(A)\cap H=S\cap G$. So $C_H(A)$ is relatively
definable.\end{proof}
\begin{lemma} Let $G$ be an icc group.\itemize
\item Let $S$ be a subset of $G$. 
Then $S$ satisfies the uniform chain
condition on intersections with $S$ of chains of intersections of
families of uniformly definable subgroups of $G$.  
\item Let $H$ be a subgroup of $G$, 
and $K$ a relatively definable normal subgroup of $H$. Then $H/K$ is
naturally isomorphic to a sub-icc group, and the isomorphism preserves
relatively definable subgroups.\end{lemma}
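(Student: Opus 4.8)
The plan is to handle the two assertions in turn. For the first, I would argue straight from the definition of an icc group. Fix a formula $\phi(\bar x,\bar y)$ and suppose $B_1\supsetneq B_2\supsetneq\cdots\supsetneq B_m$ is a strictly descending chain with each $B_j=C_j\cap S$, where $C_j=\bigcap_{i\in I_j}\phi(G,\bar a_i)$ is an intersection of $\phi$-definable subgroups of $G$. Replace $C_j$ by $\bigcap_{k\le j}C_k$: this is still an intersection of $\phi$-definable subgroups (now indexed by $\bigcup_{k\le j}I_k$), and it leaves $B_j$ unchanged, since $\bigl(\bigcap_{k\le j}C_k\bigr)\cap S=\bigcap_{k\le j}(C_k\cap S)=\bigcap_{k\le j}B_k=B_j$, the last step because the $B_k$ descend. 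So one may assume $C_1\supseteq C_2\supseteq\cdots$; and $C_j=C_{j+1}$ would force $B_j=B_{j+1}$, so in fact $C_1\supsetneq\cdots\supsetneq C_m$ is a strictly descending chain of intersections of $\phi$-definable subgroups of $G$. Hence $m$ is at most the bound the icc of $G$ attaches to $\phi$, and this bound depends on $\phi$ alone, not on $S$ or the parameters; that is the asserted uniformity. The ascending case is identical.

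For the second assertion, write $K=D\cap H$ with $D$ a definable subgroup of $G$. First arrange that $H$ normalizes $D$: set $\tilde D:=\bigcap_{h\in H}D^{h}$. By Lemma 2.1 the conjugates $D^{h}$ form a uniformly definable family, so by the icc of $G$ this is a finite subintersection and $\tilde D$ is definable in $G$; it is $H$-invariant because $\tilde D^{\,h_0}=\bigcap_{h\in H}D^{hh_0}=\tilde D$ for $h_0\in H$; and $\tilde D\cap H=\bigcap_{h\in H}(D^{h}\cap H)=\bigcap_{h\in H}(D\cap H)^{h}=\bigcap_{h\in H}K^{h}=K$, using $H^{h}=H$ and $K\trianglelefteq H$. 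Replacing $D$ by $\tilde D$, we may assume $H\le N:=N_G(D)$, so $D\trianglelefteq N$ and, by the second isomorphism theorem, $HD/D\cong H/(H\cap D)=H/K$. By Lemma 2.1, $N$ is definable in $G$.

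The heart of the argument is that the quotient group $N/D$ is again icc. Since $N$ and $D$ are definable in $G$, the group $N/D$ is interpreted in $G$, so any group-language formula $\psi$ evaluated in $N/D$ is equivalent to a formula $\psi^{*}$ of $G$ depending only on $\psi$. Hence a uniformly $\psi$-definable family $\{V_i/D\}$ of subgroups of $N/D$ pulls back to the uniformly definable family $\{V_i\}$ of subgroups of $G$ cut out by $x\in N\wedge\psi^{*}(x,\bar y)$, each $V_i$ lying between $D$ and $N$; and since preimage under $N\to N/D$ commutes with intersection and is injective on subgroups containing $D$, a strictly descending chain of intersections of the $V_i/D$ lifts to a strictly descending chain of intersections of the $V_i$, which the icc of $G$ bounds by a function of $\psi^{*}$, hence of $\psi$. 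Thus $N/D$ is icc, and $H/K\cong HD/D\le N/D$ exhibits $H/K$ as a subgroup of an icc group, i.e. as a sub-icc group. For the last clause: a relatively definable subgroup of $HD/D$ inside $N/D$ has the form $E\cap(HD/D)$ with $E=V/D$ for $V$ a definable subgroup of $G$, $D\le V\le N$; the identity $V\cap HD=(V\cap H)D$ holds because $D\le V$ (if $x=hw$ with $h\in H$, $w\in D\le V$, then $h=xw^{-1}\in V$), so $E\cap(HD/D)=(V\cap H)D/D$, which is the image of $(V\cap H)/K$ under the isomorphism. Hence the isomorphism carries the relatively definable subgroups of $H/K$ — the images of the relatively definable subgroups $V\cap H\supseteq K$ of $H$ arising from definable $V\ge D$ in $G$ — bijectively onto the relatively definable subgroups of $HD/D$.

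The step I expect to be the main obstacle is showing that $N/D$ is icc: one has to make sure that, under the interpretation of $N/D$ in $G$, the translated formula $\psi^{*}$ depends only on $\psi$ (so the bound stays uniform), and that passing to preimages preserves both intersections and the strictness of chains. The preliminary reduction to $H$-invariant $D$ and the final matching of relatively definable subgroups are then routine bookkeeping, the one subtlety there being that "relatively definable subgroup of $H/K$" must be read as the image of a relatively definable subgroup of $H$ that is the trace in $H$ of a definable subgroup of $G$ containing $D$.
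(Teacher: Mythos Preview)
Your proof is correct and follows essentially the same route as the paper: for part~(1) you make the ambient chain monotone and invoke the icc bound, and for part~(2) you replace $D$ by the $H$-invariant intersection $\tilde D=\bigcap_{h\in H}D^{h}$, embed $H/K\cong H\tilde D/\tilde D$ into $N_G(\tilde D)/\tilde D$, and observe that this definable quotient is again icc. The only difference is level of detail: the paper simply asserts that ``icc is preserved under definable quotients'' and that the isomorphism ``preserves relatively definable subgroups'', whereas you spell out the interpretation argument for the former and the modular-law computation $V\cap HD=(V\cap H)D$ (for $D\le V$) for the latter, and you correctly flag that the matching of relatively definable subgroups runs naturally in the direction from $HD/D$ back to $H$.
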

\begin{proof} Let $\{H_i\cap S\}_{i\in I}$ 
be a chain (ascending or descending), where each $H_i$ is 
uniformly definable. Then for each $i$, the intersection 
$\bigcap_{j\ge i}H_j$ is equal to a finite sub-intersection, and hence
is uniformly definable. Thus $\{\bigcap_{j\ge i}H_j\}_{i\in I}$ is 
of finite length bounded by a positive
integer $n$ (which depends only on the defining formula).

For the second part, let $T$ be a definable subgroup of $G$ such 
that $T\cap H=K$. By icc, the intersection 
$J:=\bigcap_{g\in H}T^g$ is a finite
sub-intersection, and hence definable. By Lemma 2.1, 
$N_G(J)$ is definable. Hence $N_G(J)$ is icc. But 
$H\cap J=K$ and $H\le N_G(J)$. Thus $H/K$ is isomorphic to
$HJ/J$, which is a subgroup of $N_G(J)/J$. But $J$ is definable, 
so $N_G(J)/J$ is icc because the icc is preserved under
definable quotients. Thus $H/K$ is isomorphic to a sub-icc group by 
an isomorphism which preserves relatively definable subgroups.
\end{proof}
The sub-icc chain condition and substability are 
preserved under arbitrary subgroups. Taking $S$ in Lemma 2.2
to be a subgroup of $G$, we see that a sub-icc group 
satisfies the uniform chain
condition on chains of intersections of families of uniformly relatively
definable subgroups. Lemma 2.2 implies that 
the sub-icc chain condition is preserved by
quotients by relatively definable normal subgroups. The same proof 
replacing a sub-icc group by a substable group shows that
substability is preserved under quotients by relatively definable
normal subgroups. 
\begin{lemma}\itemize
\item Let $G$ denote an icc-group and $A$ a subset of $G$.  
Then the iterated centralizers $C_G^i(A)$ and the terms in the 
upper central series $Z_i(G)$, where $i<\omega$, are definable.
\item Let $G$ denote a sub-icc group, $A$ a subset of $G$ and $H$ a
relatively definable subgroup of $G$.  Then, for $i<\omega$, 
the iterated centralizers
$C_H^i(A)$ and the upper central series $Z_i(G)$ are 
definable relative to $G$.\end{lemma}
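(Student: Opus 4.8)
The plan is to induct on $i$ in both parts, constructing the formula (resp.\ the relatively defining datum) for index $i+1$ from that for index $i$ by means of Lemma 2.1, together with Lemma 2.2 in the sub-icc treatment of the upper central series. For the iterated centralizers this is routine. With the convention $C_G^1(A)=C_G(A)$ and $C_G^{i+1}(A)=C_G(C_G^i(A))$, the base case $C_G^1(A)=C_G(A)$ is definable by Lemma 2.1, which says that centralizers of arbitrary subsets of an icc group are definable. Every $C_G^i(A)$ is a subgroup, hence a subset, of $G$, so if $C_G^i(A)$ is definable then Lemma 2.1 applies again to give $C_G(C_G^i(A))=C_G^{i+1}(A)$ definable, and the induction closes. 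In the sub-icc case the same argument works verbatim with $C_H$ in place of $C_G$ and ``definable'' replaced by ``definable relative to $G$'', invoking the second clause of Lemma 2.1 (which, for any subset $A$ of $G$ and in particular for any subgroup of $H$, makes $C_H(A)$ definable relative to $G$).

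For the upper central series in the icc case, $Z_0(G)=\{1\}$ and $Z_1(G)=Z(G)=C_G(G)$ are definable, the latter by Lemma 2.1. If $Z_i(G)$ is defined by a formula $\psi_i(x)$, then since $g$ lies in $Z_{i+1}(G)$ precisely when $g^{-1}x^{-1}gx\in Z_i(G)$ for all $x\in G$, the subgroup $Z_{i+1}(G)$ is defined by $\forall x\,\psi_i(g^{-1}x^{-1}gx)$ and hence is definable; this closes part (a). The quantifier over $G$ is harmless here because an icc group is trivially a definable subset of itself.

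In the sub-icc case this last device fails, because the ambient icc supergroup $X$ need not see $G$ as a definable set and ``$\forall x\in G$'' is then not expressible over $X$; this is where the work lies. We still have $Z_0(G)=\{1\}$ relatively definable and $Z_1(G)=C_G(G)$ relatively definable by the second clause of Lemma 2.1 (with $H=G$, $A=G$). Assuming $Z_i(G)$ relatively definable, it is a relatively definable normal subgroup of $G$, so Lemma 2.2 presents $G/Z_i(G)$ as a sub-icc group via an isomorphism preserving relatively definable subgroups, and the $Z_1$-case applied to that sub-icc group gives $Z_{i+1}(G)/Z_i(G)=Z(G/Z_i(G))$ relatively definable in $G/Z_i(G)$. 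It then remains to pull this back along the canonical surjection $\nu\colon G\to G/Z_i(G)$, i.e.\ to show that the $\nu$-preimage of a relatively definable subgroup of $G/Z_i(G)$ is relatively definable in $G$, and this is the one genuinely delicate point. It follows by unwinding the proof of Lemma 2.2: there one produces a definable subgroup $J$ of $X$ with $J\cap G=Z_i(G)$ and $G\le N_X(J)$, realises $\nu$ as the restriction to $G$ of the quotient map $N_X(J)\to N_X(J)/J$ with image $GJ/J$, and observes that $N_X(J)$ is definable in $X$; a relatively definable subgroup of $GJ/J$ is cut out by a definable subgroup $P'$ of $N_X(J)/J$, whose preimage $\widehat P$ in $N_X(J)$ is definable in $X$, and the $\nu$-preimage of that subgroup is exactly $\widehat P\cap G$, which is relatively definable. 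Hence $Z_{i+1}(G)$ is relatively definable and the induction closes. I expect this pull-back step to be the main obstacle, and it is probably cleanest to record it separately --- ``preimages of relatively definable subgroups along canonical quotients by relatively definable normal subgroups are relatively definable'' --- after which the claim for $Z_i(G)$ is immediate.
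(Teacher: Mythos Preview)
Your argument for the upper central series is correct in both the icc and sub-icc cases, and your identification of the genuine obstacle in the sub-icc case --- that one cannot quantify over $G$ inside the ambient icc group $X$ --- together with your explicit pull-back argument via the construction in Lemma 2.2, is exactly the right idea. In fact you spell out the pull-back more carefully than the paper does: the paper simply invokes that the isomorphism in Lemma 2.2 ``preserves relatively definable subgroups'' and leaves the reader to infer that this works in both directions.

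The one substantive discrepancy is your convention for iterated centralizers. You take $C_G^{i+1}(A)=C_G(C_G^i(A))$, under which the claim is indeed a trivial repeated application of Lemma 2.1. But that is not the notion the paper has in mind: the paper (implicitly, via the displayed identity in its proof) uses the standard stable-group-theory definition
\[
C_H^i(A)\;=\;C_H\bigl(A/C_H^{i-1}(A)\bigr)\;\cap\;\bigcap_{j<i}N_H\bigl(C_H^j(A)\bigr),
\]
i.e.\ $h\in C_H^i(A)$ iff $h$ normalises each $C_H^j(A)$ for $j<i$ and $[h,A]\subseteq C_H^{i-1}(A)$. Under this definition one has $Z_i(G)=C_G^i(G)$, so the upper central series is literally a special case and need not be treated separately. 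Your convention does not have this property (e.g.\ $C_G(C_G(G))=C_G(Z(G))=G$, not $Z_2(G)$), so your iterated-centralizer clause, while correct for what you wrote, proves a different and much easier statement than the paper's.

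The good news is that your upper-central-series argument already contains everything needed for the paper's version: the term $C_H(A/C_H^{i-1}(A))$ is handled by exactly your pass-to-the-quotient-and-pull-back manoeuvre, and the normaliser terms $N_H(C_H^j(A))$ are relatively definable by Lemma 2.1. So the paper's route is more unified (one induction covers both claims), while yours separates the two and makes the pull-back step more transparent; once you adopt the intended definition of $C_H^i(A)$, the two approaches coincide.
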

\begin{proof} We prove the second part. The first
part is similar.  We use 
induction on $i$. The case $i=1$ is in Lemma 2.1.  
So we may assume that the result is true for $i-1$.  But
$$C^i_H(A)=C_H(A/C_H^{i-1}(A))\cap \bigcap_{j\in \{0,\dots,i-1\}}
N_H(C_H^j(A)).$$
By induction hypothesis, for all $j\in \{0,\dots,i-1\}$, the
iterated centralizers $C_H^j(A)$ are relatively definable. By Lemma 2.2, 
the quotients $G/C_H^j(A)$ for $j\in \{0,\dots,i-1\}$ are
sub-icc groups.  

By Lemma 2.1 (applied in the
sub-icc group $G/C_H^j(A)$), the centralizer 
$C_H(A/C_H^{i-1}(A))$ is relatively
definable. Also the normalizers $N_H(C_H^j(A))$ 
are relatively definable where $1\leq j\leq i-1$.  
Thus $C_H^i(A)$ is relatively definable. 
The finite terms of the upper
central series of $G$ are now relatively definable
as a series of iterated centralizers.\end{proof}

{\bf Remark} 
By Lemmas 2.1-2.3, a sub-icc group and all its
quotients by relatively definable normal subgroups satisfy 
the uniform chain condition on (iterated) centralizers, i.e.\ there
is a bound on the lengths of chains of (iterated) 
centralizers.  It is not hard to give examples of groups $G$ with the chain condition
on centralizers such that the quotient group $G/Z(G)$ has an infinite
descending chain of centralizers, or arbitrarily
long chains of centralizers.

We now consider solubility and nilpotency. 
We first extend [6], Corollary 2.3, 
in the case of sub-icc groups and obtain bounds
on the derived lengths involved. 
\begin{lemma} A locally nilpotent 
sub-icc group $G$ is soluble of
derived length bounded by the maximal 
length $k$ of all chains of centralizers. Moreover, $G$ has a series of  
characteristic relatively definable subgroups of length bounded by
$k$ with abelian factors. 
Each of these characteristic relatively definable subgroups can
be chosen to be of the form of the 
centralizer of a (finite) subset of $G$.\end{lemma}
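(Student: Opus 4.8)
The plan is to show that in a locally nilpotent sub-icc group the chain condition on centralizers forces an upper-central-type series of bounded length with relatively definable abelian-type terms. First I would invoke the classical fact (this is essentially [6, Cor.~2.3], which we are extending) that a locally nilpotent group satisfying the minimal condition on centralizers — and a sub-icc group does, by the Remark following Lemma 2.3 — is \emph{hypercentral with finite hypercentral length}: its upper central series $1 = Z_0(G) \le Z_1(G) \le \cdots$ stabilises at $G$ after finitely many steps. The point of the present lemma is to bound that length, and all the intermediate terms, \emph{intrinsically} by the quantity $k$, the maximal length of a chain of centralizers in $G$.

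Second, I would produce the numerical bound. Set $Z_i = Z_i(G)$. Each $Z_i$ is relatively definable by Lemma 2.3 (second part, applied with $A = G$, or rather observing that $Z_i(G)$ is explicitly listed there as definable relative to $G$). I want to associate to the central series a \emph{strictly} descending chain of centralizers so that its length is at most $k$. The natural device is: consider the centralizers $C_G(Z_i)$. If $Z_i \lneq Z_{i+1} = G$ is not yet the whole group, then $C_G(Z_{i+1}) \lneq C_G(Z_i)$ — more carefully, one uses that $C_G(Z_i) \supseteq Z(G/\text{(something)})$-type witnesses, or one works with $C_G(g_0,\dots,g_{i-1})$ for suitably chosen elements, pushing down one level of the series at a time. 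The cleanest route is probably the standard one: pick, whenever $Z_i \ne G$, an element $x_{i+1} \in Z_{i+2}\setminus Z_{i+1}$; the nested centralizers $C_G(x_1) \supseteq C_G(x_1,x_2) \supseteq \cdots$ are eventually all distinct, and each is a centralizer of a finite set, hence definable, so the chain has length $\le k$. This forces the upper central series to reach $G$ in $\le k$ steps. Hence $G$ is nilpotent of class $\le k$, in particular soluble of derived length $\le k$ (indeed $\le \lceil \log_2 k\rceil + 1$, but the cruder bound $k$ suffices and matches the statement).

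Third, for the ``moreover'' clause I would exhibit the series $Z_0(G) \le Z_1(G) \le \cdots \le Z_k(G) = G$ itself: its length is $\le k$ by the previous paragraph, each term is characteristic (the upper central series is characteristic in any group), each term is relatively definable by Lemma 2.3, and each successive quotient $Z_{i+1}(G)/Z_i(G)$ is central in $G/Z_i(G)$, hence abelian. For the final sentence, that each term can be taken to be the centralizer of a finite subset: here I would note $Z_{i+1}(G)/Z_i(G) = Z(G/Z_i(G))$, and in the sub-icc quotient $G/Z_i(G)$ (sub-icc by Lemma 2.2) the centre is $C_{G/Z_i(G)}(G/Z_i(G))$, which by the icc (Lemma 2.1) equals the centralizer of a finite subset $\{\bar g_1,\dots,\bar g_m\}$; lifting, $Z_{i+1}(G)$ is the preimage, which is $C_G(g_1,\dots,g_m) \cdot Z_i(G)$ — to get it exactly as a centralizer of a finite set one instead takes $Z_{i+1}(G) = C_G\bigl(\{[g,x] : x\in G\}/\text{lower terms}\bigr)$-style expression, or simply observes inductively that each $Z_j(G)$ appears in the formula for $C^j_G$ in Lemma 2.3 and is built from iterated centralizers, so can be rewritten as the centralizer of the finite parameter set collected along the induction.

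The main obstacle I anticipate is the bookkeeping in the last step: producing each $Z_{i+1}(G)$ \emph{literally} in the form $C_G(\text{finite set})$, rather than merely as a relatively definable subgroup built out of finitely many centralizer/normalizer operations. The honest fix is that in a locally nilpotent group all the relevant normalizers are trivially the whole group (every subgroup's normalizer contains large chunks by local nilpotence), so the formula of Lemma 2.3 for $C^i_G(1)$ collapses to a single centralizer of the accumulated finite parameter tuple; making that collapse precise, using local nilpotence to discharge the normalizer conjuncts, is the one spot requiring genuine care rather than routine quotation of earlier lemmas.
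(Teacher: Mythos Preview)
Your approach has a genuine gap at the outset: you assume that the upper central series of $G$ terminates after finitely many steps, i.e.\ that $G$ is nilpotent. You attribute this to [6, Cor.~2.3], but that result (which the present lemma is explicitly said to \emph{extend}) gives only \emph{solubility}; and the paper's own introduction records that even in the periodic case the conclusion available from [3] is merely \emph{hypercentrality}, which permits a transfinite upper central series. Your chain argument for bounding the class by $k$ does not repair this: the strict descent $C_G(x_1,\dots,x_i) \supsetneq C_G(x_1,\dots,x_{i+1})$ is never verified, and indeed there is no reason the witness to $x_{i+1} \notin Z_{i+1}$ should lie in $C_G(x_1,\dots,x_i)$.

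The paper takes a different route and does not use the upper central series as the characteristic series at all. From local nilpotency it extracts only that $Z_2(G)\setminus Z(G)$ is nonempty, and that for any $z$ there the commutator map $g\mapsto [g,z]$ embeds $G/C_G(z)$ into $Z(G)$, so $G'\le C_G(z)$. The key move is then to set $I:=\bigcap_{z\in Z_2(G)\setminus Z(G)} C_G(z)$: by sub-icc this is a finite sub-intersection, hence $I=C_G(z_1,\dots,z_n)$ is \emph{literally} the centralizer of a finite set; it is characteristic because the index set $Z_2(G)\setminus Z(G)$ is; it contains $G'$; and since each $C_G(z)$ is a proper centralizer, chains of centralizers inside it have length at most $k-1$. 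Induction on $k$ now gives both the derived-length bound and the required series. This construction makes the final clause of the lemma --- each term a centralizer of a finite subset --- automatic, whereas for $Z_i(G)$ with $i\ge 2$ (an \emph{iterated} centralizer, not a centralizer) this is exactly the point you flag as needing care, and your proposed fix via local nilpotency collapsing the normalizer conjuncts does not produce a single centralizer.
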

\begin{proof} For the first assertion 
we can either use induction on $k$ or do a direct
construction of an infinite descending chain of centralizers. To do
induction on $k$ note that if $k=1$, then $G$ must be abelian, so the
result holds. Thus we may assume that the result is true for any
proper subgroup whose chains of centralizers have length bounded by
$k-1$. The proof of \cite{bryant},Corollary 2.3 
yields an element $z\in Z_2(G)-Z(G)$
such that $G/C_G(z)$ is abelian. By induction
hypothesis $C_G(z)$ 
is soluble of derived length bounded by $k-1$. Hence 
$G$ is soluble of derived length bounded by $k$.

To prove the last assertion consider the derived group
$G'$. The proof of [1], Corollary 2.3 shows that $G'\leq C_G(z)$ for any
element $z\in Z_2(G)-Z(G)$, and that the quotient group
$Z_2(G)/Z(G)$ is non-trivial (so there exists such an element $z$).  
Let $C_z:=C_G(z)$, where $z$ is any
element in $Z_2(G)-Z(G)$.  As $k-1$ bounds the lengths of chains of
centralizers in $C_z$, by the first assertion in this proposition, 
we see that $C_z$ is soluble of
derived length $k-1$.  But $G'\leq \bigcap_{z\in Z_2(G)-Z(G)} C_z$.

Let $I:=\bigcap_{z\in Z_2(G)-Z(G)} C_z$. Then $I$ is equal to a
finite sub-intersection $C_{z_1}\cap \dots \cap C_{z_n}$ for some
elements $z_1,\dots,z_n$ by the sub-icc chain condition.  Thus $I$ is
of the form of the centralizer $C_G(F)$ of a finite set $F$ (in
particular is definable relative to $G$), and is also supergroup of
$G'$ which is characteristic in $G$. Furthermore, $I$ is soluble of
derived length $k-1$. By either using induction on $k$ or by 
applying the same argument to
any $G^{(i)}$, for $i\in \{0,\dots,k\}$, we conclude that $G$ has a
series $\{1\}=G_0\le G_1\le\dots\le G_k=G$ of characteristic subgroups
such that each $G_i$ 
is definable relative to $G$ and is of the form of the centralizer 
of a subset of $G$ and such that 
all the quotients $G_{i+1}/G_i$ are abelian.  
This completes the proof.\end{proof}
\begin{lemma}\itemize
\item Let $G$ denote an icc-group. Let $H$ be a soluble
(resp.\ nilpotent) subgroup of $G$. Then
there is a definable soluble (resp.\ nilpotent) supergroup $D$ of $H$ 
of the same derived length (resp.\ nilpotency class), and whose normalizer
contains the normalizer of $H$. The 
defining formula for $D$ only depends on the derived length
(resp.\ class).
\item Let $G$ denote a sub--icc group. Let $H$ be a soluble
(resp.\ nilpotent) subgroup of $G$. Then
there is a relatively 
definable soluble (resp.\ nilpotent) supergroup $D$ of $H$ 
of the same derived length (resp.\ nilpotency class), and whose normalizer
contains the normalizer of $H$. The relative 
defining formula for $D$ only depends on the derived length
(resp.\ class).\end{lemma}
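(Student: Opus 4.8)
The plan is to build the envelope $D$ by a double--centralizer construction at the bottom and an induction up the derived series (resp.\ the central filtration), each step pulling back through a relatively definable quotient supplied by Lemmas 2.1--2.3. I would prove the sub--icc statements, the icc case being the instance where the ambient supergroup is $G$ itself and ``relatively definable'' becomes ``definable''; throughout, the induction on $m$ will establish ``$D$ has derived length (resp.\ class) $\le m$'', and equality with the derived length (resp.\ class) of $H$ then follows from $H\le D$. For the base case let $H$ be abelian and put $D:=C_G(C_G(H))$. Since $H\le C_G(H)$, each element of $D$ centralizes $H$, so $D\le C_G(H)$; as $D$ then centralizes $C_G(H)\ge D$, the group $D$ is abelian, and $H\le D$ is immediate. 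By Lemma 2.1 (second part) $D$ is relatively definable, by a formula not depending on $H$ at all (the icc bound for centralizers of finitely many elements being a fixed integer). If $g\in N_G(H)$ then $C_G(H)^g=C_G(H^g)=C_G(H)$ and hence $D^g=D$, so $N_G(D)\supseteq N_G(H)$. This settles derived length / class $1$.

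For solubility with $H$ of derived length $d\ge 2$, the subgroup $H'=[H,H]$ has derived length $\le d-1$, so by the inductive hypothesis there is a relatively definable soluble supergroup $D'$ of $H'$ of derived length $\le d-1$ with $N_G(D')\supseteq N_G(H')$, whose relative defining formula depends only on $d-1$. Now $H\le N_G(H')\le N_G(D')$ and $D'\trianglelefteq N_G(D')$ is relatively definable, so by Lemma 2.2 the quotient $\bar G:=N_G(D')/D'$ is sub--icc and the quotient map preserves relative definability. The image $\bar H$ of $H$ is abelian, so the base case applied in $\bar G$ produces a relatively definable abelian $\bar D\supseteq\bar H$ with $N_{\bar G}(\bar D)\supseteq N_{\bar G}(\bar H)$. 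Taking $D$ to be the preimage of $\bar D$ in $N_G(D')$, it is relatively definable, $D\supseteq HD'\supseteq H$, and $D/D'$ abelian gives $D^{(1)}\le D'$, whence $D^{(d)}\le (D')^{(d-1)}=1$. For the normalizer, any $g\in N_G(H)$ normalizes $H'$ hence $D'$, acts on $\bar G$ normalizing $HD'$ hence $\bar H$ hence $\bar D$, and therefore normalizes $D$. The relative defining formula of $D$ arises from that of $D'$ by the deterministic normalizer, quotient and base--case operations, so it depends only on $d$.

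For nilpotency the soluble argument must be modified, since an abelian extension of a class $c-1$ group need not have class $c$. Given $H$ nilpotent of class $c\ge 2$, set $D_1:=C_G(C_G(H))$ and $Z:=C_{D_1}(D_1)=Z(D_1)$; both are relatively definable by Lemma 2.1. As in the base case $H\le D_1$ and $N_G(H)\subseteq N_G(D_1)$, and as $Z$ is characteristic in $D_1$ also $N_G(H)\subseteq N_G(Z)$. I would then verify the two facts making everything work: first $Z(H)\le Z$, since any $z\in Z(H)$ lies in $C_G(H)$ and $D_1=C_G(C_G(H))$ centralizes $C_G(H)\ni z$, so $z\in Z(D_1)$; and second $[H,Z]=1$, since $H\le D_1$ and $Z$ is central in $D_1$. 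Consequently the image $\bar H$ of $H$ in the sub--icc quotient $\bar G:=N_G(D_1)/Z$ (Lemma 2.2, as $Z\trianglelefteq N_G(D_1)$ is relatively definable) is nilpotent of class $\le c-1$, so by induction there is a relatively definable $\bar D\supseteq\bar H$ of class $\le c-1$ with $N_{\bar G}(\bar D)\supseteq N_{\bar G}(\bar H)$. Let $D^*$ be the preimage of $\bar D$ in $N_G(D_1)$ and put $D:=D^*\cap D_1$. Then $D$ is relatively definable, $D\supseteq HZ\cap D_1\supseteq H$; since $Z$ is central in $D_1$ the subgroup $D\cap Z$ is central in $D$, while $D/(D\cap Z)\hookrightarrow D^*/Z=\bar D$ has class $\le c-1$, so $D$ is nilpotent of class $\le c$. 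Finally any $g\in N_G(H)$ normalizes $H$, $D_1$ and $Z$, hence $HZ$, hence $\bar H$, hence $\bar D$, hence $D^*$, hence $D^*\cap D_1=D$; and the formula of $D$ again depends only on $c$ by induction.

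The routine parts are the repeated appeals to Lemmas 2.1--2.3 for relative definability and the bookkeeping that the defining formula is a function of the derived length / class alone. The one genuinely delicate point, which I expect to be the main obstacle, is the nilpotent induction: the naive pull--back $D^*$ is merely ``class $c-1$''--by--abelian, hence soluble but not visibly of class $c$, and the cure is to intersect back into $D_1$ so that the abelian kernel $Z$ becomes \emph{central}. This works only because of the specific choice $D_1=C_G(C_G(H))$, $Z=Z(D_1)$, for which $Z(H)\le Z$, $[H,Z]=1$ and $N_G(H)$ normalizing both $D_1$ and $Z$ all hold simultaneously; producing a single relatively definable $Z$ with these three properties is the heart of the argument.
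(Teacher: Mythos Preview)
Your proof is correct and follows essentially the same strategy as the paper: an abelian envelope via the double centralizer $Z(C_G(A))=C_G(C_G(A))$, then induction through a relatively definable quotient supplied by Lemmas 2.1--2.2. The only differences are cosmetic: your soluble induction runs top--down via $H'$ whereas the paper goes bottom--up via the last nontrivial derived term, and in the nilpotent step the paper works inside $C_G(Z(H))/Z(C_G(Z(H)))$ rather than your $N_G(D_1)/Z(D_1)$, so its inductive preimage already lies in $C_G(Z(H))$ and your final intersection with $D_1$ is unnecessary---though your arrangement makes the normalizer inclusion $N_G(H)\subseteq N_G(D)$ more explicit than the paper does.
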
 
\begin{proof} We use induction on the derived length of $H$ in the
soluble case.  
Consider the last nontrivial commutator subgroup $A$ of $H$. Then
$B:=Z(C_G(A))\geq A$ is definable and abelian.  But $N_G(B)$
contains $H$, so by Lemma 2.1, 
$N_G(B)/B$ is a definable group containing $HB/B$, which has
smaller derived length. Inductively, we find a 
definable supergroup $T/B$ of $HB/B$ in $N_G(B)/B$ whose defining
formula only depends on its derived length and which is (definably) 
soluble of the same length as $HB/B$. We let $D$ be the pre-image $T$; and
the proof is complete in the soluble case.   

We now deal with the nilpotent case. We use 
induction on the nilpotency class of $N$. If it is zero,
then $N=(1)$, and the result is trivial, so we may assume that the class is at 
least $1$ and
that it holds for smaller class. Consider the definable (in 
$G$ -- use Lemma 2.1) groups 
$Z(C_G(Z(N))$ and $C_G(Z(N))$. The factor 
$Q:=C_G(Z(N))/(Z(C_G(Z(N))))$
contains the factor $F:=N/(Z(C_G(N)))$. Notice that $F$ is icc by 
Lemma 2.2. 
But $F$ has smaller nilpotency class than $N$. By induction 
hypothesis, $F$ is
contained in a nilpotent definable (in $Q$) subgroup 
$D^*:=D/(Z(C_G(Z(N))))$
of the same class as that of $F$ (which means that one class 
less than
the class of $N$). The pre-image $D$ is definable in 
$L:=C_G(Z(N))$. But $L$ is
definable in $G$. Thus $D$ is definable in $G$. On the other hand 
the nilpotency class of $D$ equals $1+k=1+k'$, 
where $k$ is the nilpotency class of $D^*$, 
and $k'$ the nilpotency class of $F$. This equals the nilpotency 
class of $N$; and we are done.

The second part now follows immediately. For if $G$ is sub-icc with
icc-supergroup $X$. Then by the first part $H$ is contained in a
group $D_1$ which is definable in $X$ and nilpotent (resp.\ soluble)
of the same nilpotency class (resp.\ derived length) as that of $H$.  
Then $D_1\cap G$ contains
$H$ and is definable relative to $G$ and has the required
properties.\end{proof}

\begin{lemma}
Let $G$ denote a periodic 
locally soluble sub-icc group.  
Then $G$ contains a non-trivial abelian normal subgroup.\end{lemma}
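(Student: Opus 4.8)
The plan is to reduce the statement to the assertion that the Hirsch--Plotkin radical $HP(G)$ is non-trivial. First, a finitely generated periodic soluble group is finite, so a periodic locally soluble group is locally finite; thus $G$ is locally finite. Second, it suffices to show $HP(G)\neq 1$: as $HP(G)$ is a subgroup of the sub-icc group $G$ it is itself sub-icc, and being locally nilpotent it is soluble by Lemma 2.4; the bottom non-trivial term of its derived series (equivalently, the bottom term of the characteristic relatively definable series of Lemma 2.4) is then abelian and characteristic in $HP(G)$, hence normal in $G$, and this is the subgroup we want.

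So suppose for contradiction that $HP(G)=1$; by Lemma 2.4 and the observation that the bottom of the derived series of a normal soluble subgroup is abelian and characteristic, $G$ then has no non-trivial soluble, and no non-trivial locally nilpotent, normal subgroup. Since $G$ is sub-icc it satisfies the minimal condition on centralizers, so, $G$ being non-trivial, there is a minimal non-trivial centralizer $M=C_G(F)$ with $F$ finite (by the chain condition on centralizers; see Lemmas 2.1--2.3 and the following Remark). Then $M$ is abelian: for $1\neq x\in M$, $C_G(F\cup\{x\})$ is a non-trivial centralizer contained in $M$, hence equals $M$ by minimality, so $x\in Z(M)$; thus $M=Z(M)$. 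The goal is now to build from $M$ a non-trivial \emph{nilpotent normal} subgroup of $G$, contradicting $HP(G)=1$. A natural route: $M$ is relatively definable by Lemma 2.1, hence so is each term of the ascending normalizer chain $M=N_0\leq N_1\leq\cdots$ with $N_{i+1}=N_G(N_i)$; once this chain stabilizes at a self-normalizing relatively definable $P=N_n$, the subgroup $M$ is subnormal in $P$ of defect at most $n$, and since $M$ is abelian its normal closure $\langle M^{P}\rangle$ is nilpotent by Roseblade's theorem, so $HP(P)\neq 1$; if $P=G$ this already contradicts our assumption.

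The main obstacle is the passage from ``$HP(P)\neq 1$ for a \emph{proper} relatively definable $P$'' to ``$HP(G)\neq 1$'' (together with a technical point: the $N_i$ are defined by formulas of increasing complexity, so stabilization of the normalizer chain must be justified separately, e.g.\ by staying inside the uniformly definable families of Lemma 2.5, whose defining formulas depend only on the nilpotency class). My proposed line is to consider the non-empty family $\mathcal R$ of relatively definable subgroups $R$ with $HP(R)\neq 1$, choose a maximal member $R$ by the uniform chain condition, use Lemma 2.5 to show $R$ is self-normalizing and that any characteristic abelian subgroup of $HP(R)$ enlarges to a relatively definable abelian subgroup still normalized by $R$ and still in $\mathcal R$, and then derive a contradiction from maximality of a \emph{proper} such $R$ --- either by strictly enlarging $R$ within $\mathcal R$, or by extracting a non-trivial nilpotent normal subgroup of $G$ via a limiting argument over the finite subgroups of $G$, where Fitting subgroups exist and the uniform centralizer-chain bound controls the Fitting heights. (An alternative finish avoids $\mathcal R$: $G$ being locally soluble has no non-trivial quasisimple subnormal subgroup, so the generalized Fitting subgroup satisfies $F^{*}(G)=HP(G)$; were this trivial, $C_G(F^{*}(G))=G$ would contradict $C_G(F^{*}(G))\leq F^{*}(G)$ --- provided one has the generalized-Fitting theory for locally finite groups with the minimal condition on centralizers.) Once $HP(G)\neq 1$, the first paragraph finishes the proof.
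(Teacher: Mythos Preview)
Your reduction to $HP(G)\neq 1$ is correct, and the minimal-centralizer step producing an abelian relatively definable $M$ is fine. But none of your three proposed completions actually closes the argument. (1) The normalizer chain $N_{i+1}=N_G(N_i)$ consists of relatively definable subgroups, but the defining formulas grow with $i$; the sub-icc condition bounds chains only within a \emph{uniformly} definable family, so it does not force stabilization, and your suggestion to ``stay inside the uniformly definable families of Lemma~2.5'' does not help, since that lemma produces definable envelopes of nilpotent or soluble subgroups, not of iterated normalizers. (2) The family $\mathcal R$ has the same defect: its members are relatively definable by formulas of unbounded complexity, so there is no ascending chain condition guaranteeing a maximal element. (3) The $F^*(G)$ alternative needs $C_G(F^*(G))\le F^*(G)$; for \emph{soluble} $G$ this is the classical $C_G(F(G))\le F(G)$, but for merely \emph{locally} soluble $G$ it is not available---indeed, establishing $HP(G)\neq 1$ in this generality is essentially the content of the lemma, so invoking it here is circular.

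The paper's argument avoids all of this by working from below and keeping everything uniform. After passing to a countable elementary submodel, one writes $G=\bigcup_i G_i$ with each $G_i$ finite soluble, takes the last non-trivial term of the derived series of $G_i$, and enlarges it via Lemma~2.5 to an abelian $A_i\trianglelefteq G_i$ that is relatively definable by a formula depending only on the derived length (here $1$), hence \emph{uniformly} in $i$. The normalizers $N_G(A_i)$ are then uniformly relatively definable, the ascending chain $H_j=\bigcap_{i>j}N_G(A_i)$ stabilizes by the sub-icc condition, and since $G_j\le H_j$ for every $j$ one obtains some $A_i$ normal in all of $G$; then $Z(C_G(A_i))$ is the required definable abelian normal subgroup. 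The idea you are missing is that Lemma~2.5 is used not to control normalizers directly but to make the abelian pieces $A_i$ themselves uniformly definable, so that the chain condition applies to the family $\{N_G(A_i)\}_i$.
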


\begin{proof} Consider a countable elementary sub-model $G^*$ of
$G$.  As $G^*$ is locally finite, we have 
$G^*=\bigcup_{i\leq \omega} G_i$ where $G_i$ are finite soluble groups.  By
Lemma 2.5, 
there are non-trivial 
relatively definable abelian normal subgroups $A_i$ of
$G_i$ (take the relatively definable supergroup of the last
non-trivial term of the derived series of $A_i$). Each 
$H_j:=\bigcap_{i>j}N_{G^*}(A_i)$ is equal to a finite sub-intersection by
the icc chain condition, and hence is relatively definable.  Hence the family $\{H_j\}_{j<\omega}$  
consists of ascending and uniformly definable subgroups. Therefore,
by the icc chain condition, for some $i_0$ we have
$H_{i_0}=H_j$, for all $j>i_0$. Thus $A_{i_0}$ is a non-trivial 
normal abelian subgroup of $G^*$ since $H_j\geq G_j$ for any
$j<\omega$. Then $Z(C_{G^*}(A_{i_0}))$ yields 
a non-trivial definable abelian
normal subgroup of $G$.\end{proof}

We shall be using the easy observation that if 
$G$ is a locally finite group and $K$ a
normal subgroup such that $G/K$ is a countable $p$-group for some
prime $p$. Then there exists a
$p$-subgroup $P$ of $G$ such that $G=KP$. In particular, 
any countable $p$-subgroup of any quotient of $G$ 
is the homomorphic image of a $p$-subgroup of $G$.

\begin{lemma} 
Let $G$ denote a countable periodic locally soluble sub-icc group. 
Let $P$ be a maximal $p$-subgroup
of $G$ and $A$ a locally nilpotent normal subgroup of $G$. 
Then the Sylow-$p$ subgroups of $PA$ are conjugate to $P$ in $PA$, and
$PA/A$ is a Sylow $p$-subgroup of $G/A$. Let $p$ denote an odd prime. 
Then every $p$-subgroup of the  
factor $G/O_{p}(G)$ satisfies the descending chain condition 
on subgroups.\end{lemma}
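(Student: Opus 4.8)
The plan is to prove the three assertions in order; the first two rest on the primary decomposition of a periodic locally nilpotent group together with the Sylow theory of locally finite groups, and the third is the real content.

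Write $A=A_p\times A_{p'}$ for the decomposition of the periodic locally nilpotent group $A$ into its primary components; each primary component is characteristic in $A$ and hence normal in $G$, so $A_p,A_{p'}\trianglelefteq G$. Since $P$ is a maximal $p$-subgroup of $G$ and $A_p$ is a normal $p$-subgroup, $A_pP$ is a $p$-subgroup containing $P$, whence $A_p\le P$ and $PA=A_{p'}P=A_{p'}\rtimes P$, an extension of a normal $p'$-subgroup by a $p$-group, with $PA/A\cong P/(P\cap A)$ a $p$-group. By the Sylow theory of such locally finite groups all maximal $p$-subgroups of $PA$ are conjugate in $PA$, and $P$ is one of them (being maximal already in $G$); this gives the first assertion. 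For the second, let $\bar Q\ge PA/A$ be a $p$-subgroup of $G/A$ with preimage $Q\le G$; then $P\le Q$, $A\trianglelefteq Q$ is locally nilpotent and $Q/A$ is a countable $p$-group, so $Q/A_{p'}$ is a countable $p$-group, and by the observation stated before the lemma $Q=A_{p'}R$ for some $p$-subgroup $R\le Q$. Enlarging $R$ to a maximal $p$-subgroup $R^{*}$ of $Q$ gives $Q=A_{p'}R^{*}$; as $P$ is also a maximal $p$-subgroup of $Q$, we have $P=(R^{*})^{q}$ for some $q\in Q$, hence $Q=Q^{q}=A_{p'}^{q}(R^{*})^{q}=A_{p'}P\le PA$, so $Q=PA$ and $PA/A$ is a maximal $p$-subgroup of $G/A$.

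For the third assertion, first note that $O_p(G)$ equals the $p$-component $HP(G)_p$ of the Hirsch--Plotkin radical (each is a normal $p$-subgroup of $G$ containing the other), so $HP(G)=O_p(G)\times M$ with $M:=HP(G)_{p'}=O_{p'}(HP(G))$ a locally nilpotent $p'$-subgroup normal in $G$. A locally finite $p$-group without an infinite elementary abelian $p$-subgroup has the minimal condition on abelian subgroups (each abelian subgroup, having no infinite elementary abelian subgroup, has finite rank and is therefore Chernikov) and so is Chernikov by Shunkov's theorem; since the minimal condition passes to subgroups, it suffices to show that $\bar G:=G/O_p(G)$ has no infinite elementary abelian $p$-subgroup. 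Assume $\bar E$ is one. Using the second part of the observation before the lemma and the countability of $G$, lift $\bar E$ to a $p$-subgroup $Q$ of $G$ with $O_p(G)\le Q$ and $Q/O_p(G)=\bar E$. The key elementary remark is that, because $M\trianglelefteq G$, for $e\in Q$ and $m\in M$ we have $[e,m]=m^{-1}m^{e}\in M$, so $[e,m]\in O_p(G)$ forces $[e,m]\in M\cap O_p(G)=1$; thus $\bar e$ centralises $\bar m$ in $\bar G$ if and only if $e$ centralises $m$ in $G$. In particular $O_p(G)\le C_Q(M)$, and the conjugation action of $\bar E$ on $\bar M:=MO_p(G)/O_p(G)\cong M$ induces a faithful action of $\bar E/\bar E_0$, where $\bar E_0:=C_{\bar E}(\bar M)=C_Q(M)/O_p(G)$.

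Suppose first that $\bar E/\bar E_0$ is infinite. A coprime-action argument (carried out with some care in the locally finite setting) reduces us to a faithful action of the infinite elementary abelian $p$-group $\bar E/\bar E_0$ on an abelian $p'$-section $V$ of $M$; decomposing $V$ into weight spaces over suitable algebraic closures, faithfulness of an infinite-dimensional elementary abelian $p$-group forces infinitely many characters $\chi_1,\chi_2,\dots$ with $\ker\chi_1\supsetneq\bigcap_{j\le 2}\ker\chi_j\supsetneq\bigcap_{j\le 3}\ker\chi_j\supsetneq\cdots$, and choosing weight vectors and lifting them to elements $m_1,m_2,\dots$ of $M\trianglelefteq G$ the remark above turns this into a strictly descending chain $C_G(m_1)\supsetneq C_G(m_1,m_2)\supsetneq\cdots$ of centralisers in $G$, contradicting the minimal condition on centralisers of the sub-icc group $G$. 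Hence $\bar E/\bar E_0$ is finite, so $\bar E_0$ is itself an infinite elementary abelian $p$-subgroup of $\bar G$, and it centralises $\bar M$. Since every $\bar G$-conjugate of $\bar E_0$ also centralises the normal subgroup $\bar M$, one now wants to conclude that $\bar E_0$ lies in the Hirsch--Plotkin radical of $\bar G$, forcing $\bar E_0\le O_p(\bar G)=1$ and a contradiction, which would complete the proof. I expect this last step to be the main obstacle: it amounts to the assertion that no $p$-element of $G$ lying outside $O_p(G)$ centralises $O_{p'}(HP(G))$ -- equivalently, that $C_G(O_{p'}(HP(G)))$ is locally nilpotent, so that the Hirsch--Plotkin radical is self-centralising in the relevant sense -- and it is precisely here that oddness of $p$ is essential: the statement fails for $p=2$, which is why the main theorem treats the prime $2$ separately and at greater length. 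For odd $p$ this self-centralising property should be extracted from the locally soluble structure of $G$ together with Lemma 2.6 and the definability and chain-condition tools of Lemmas 2.1--2.5.
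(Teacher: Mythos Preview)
Your treatment of the first two assertions is correct and is essentially what the paper does: it simply cites the Sylow theory of locally finite mc-groups from Kegel--Wehrfritz [5], and your explicit argument via the primary decomposition $A=A_p\times A_{p'}$ is the standard way to unpack that citation.

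For the third assertion the paper takes a completely different route from yours: it does not argue directly at all, but invokes Theorem~A of Hartley~[4]. The first two parts of the lemma (together with local solubility) supply exactly the conjugacy hypothesis Hartley needs, and his theorem then gives the descending chain condition on $p$-subgroups of $G/O_p(G)$ for $p$ odd. That is the whole proof in the paper.

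Your direct attempt has a genuine gap, and you name it yourself: the step asserting that a $p$-element of $G$ centralising $M=O_{p'}(HP(G))$ must lie in $O_p(G)$ is not proved, only wished for. This is not a small lacuna. The self-centralising statement you want---essentially that $C_G(HP(G))\le HP(G)$ in the relevant $p$-local form---is the core of Hartley's theorem, and his proof uses an inverse-limit argument together with the Hall--Higman machinery for finite $p$-soluble groups (which is precisely where oddness of $p$ enters). It does not drop out of Lemmas~2.1--2.6. Your coprime-action reduction in the case $\bar E/\bar E_0$ infinite is also only sketched (``carried out with some care'', ``over suitable algebraic closures''), though that part is more routine and could likely be made rigorous via the mc-condition. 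In short, you have correctly isolated where the depth lies and why the prime $2$ is excluded, but you have not bypassed Hartley's theorem; the paper simply cites it.
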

\begin{proof} The first two statements follows from [5] (applied to mc-groups), 
the last part follows from the first two parts and 
thmrem A in [4].\end{proof}

\begin{lemma} Let $G$ be a periodic group and $A$ be an abelian
normal subgroup
whose $p$-parts $A_p$ satisfy the descending chain condition on
subgroups for all primes $p$.  
Then $A$ has only finitely many elements of any given
finite order.\end{lemma}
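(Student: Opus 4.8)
The plan is to reduce the statement to a statement about the $p$-primary components of $A$ and then bound, in each component, the number of elements of a fixed prime-power order.

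First I would use the primary decomposition of a torsion abelian group: since $A$ is periodic and abelian, $A=\bigoplus_p A_p$, where $A_p$ is the $p$-part, and by hypothesis each $A_p$ satisfies the descending chain condition on subgroups. Fix $n=p_1^{a_1}\cdots p_r^{a_r}$ and write $A_p[m]=\{a\in A_p:\ ma=0\}$. Since the order of $a=\sum_p a_p$ is the least common multiple of the prime-power orders of its components, an element of $A$ has order exactly $n$ if and only if its $p_i$-component has order exactly $p_i^{a_i}$ for $1\le i\le r$ and all its other components vanish; hence the set of such elements injects into $A_{p_1}[p_1^{a_1}]\times\cdots\times A_{p_r}[p_r^{a_r}]$. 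It therefore suffices to prove that $A_p[p^k]$ is finite for every prime $p$ with $A_p$ satisfying the descending chain condition and every $k\ge 1$.

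Next I would settle the case $k=1$. The group $A_p[p]$ is a vector space over $\Fp$, so if it were infinite it would be infinite-dimensional; picking a countably infinite linearly independent sequence $v_1,v_2,\dots$ in it, the subspaces $V_m:=\mathrm{span}(v_m,v_{m+1},\dots)$ satisfy $v_{m-1}\notin V_m$ and hence form a strictly descending chain $V_1\supsetneq V_2\supsetneq\cdots$ of subgroups of $A_p$, contradicting the descending chain condition. Thus $A_p[p]$ is finite. (Alternatively one could quote here the classification of abelian groups with the minimal condition on subgroups as direct sums of finitely many Pr\"ufer groups and a finite group, which makes finiteness of every $A_p[p^k]$ immediate; I prefer the self-contained argument.) I would then induct on $k$: multiplication by $p^{k-1}$ is an endomorphism of $A_p$ carrying $A_p[p^k]$ into $A_p[p]$ with kernel $A_p[p^{k-1}]$, so $A_p[p^k]/A_p[p^{k-1}]$ embeds in the finite group $A_p[p]$, whence $|A_p[p^k]|\le |A_p[p^{k-1}]|\cdot|A_p[p]|<\infty$.

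Combining these with the reduction above shows that $A$ has only finitely many elements of order $n$, for each finite $n$. The argument is essentially routine; the only step where the hypothesis genuinely intervenes, and the only one needing a moment of care, is the observation that an infinite elementary abelian $p$-subgroup would violate the descending chain condition on subgroups, together with the bookkeeping of the primary decomposition that funnels the general case down to it.
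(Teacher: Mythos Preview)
Your argument is correct. The primary decomposition reduces the problem to showing that each socle layer $A_p[p^k]$ is finite, and your induction on $k$ with base case handled via an explicit infinite descending chain in an infinite-dimensional $\Fp$-space is valid; the only hypotheses actually used are that $A$ is abelian, torsion, and that each $A_p$ has the minimal condition (the normality of $A$ in $G$ plays no role in this particular lemma).

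As for comparison with the paper: the paper does not prove this lemma at all but simply cites Kegel--Wehrfritz, \emph{Locally Finite Groups}. The standard route there goes through the structure theorem for abelian groups with the minimal condition (such a group is a finite direct sum of Pr\"ufer groups and a finite group), from which finiteness of each $A_p[p^k]$ is immediate---exactly the alternative you mention parenthetically. Your direct argument avoids invoking that classification and is therefore more self-contained; the citation route is shorter on the page but imports a nontrivial structural result. Either is perfectly acceptable for the purposes of the paper.
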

\begin{proof} See [5].\end{proof}
We shall also use the following. 

\begin{lemma} Let $p$ denote a prime and 
$G$ a locally finite 
group whose $p$-subgroups satisfy the descending chain condition
on subgroups.  
Then every $p$-subgroup of every 
homomorphic image of $G$ satisfies the descending chain condition
on subgroups.\end{lemma}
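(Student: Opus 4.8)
The plan is to reduce the statement to \emph{countable} $p$-subgroups, and then to combine the easy observation recalled just before Lemma~2.7 (that a countable $p$-subgroup of a quotient of a locally finite group is the homomorphic image of a $p$-subgroup) with the trivial fact that the descending chain condition on subgroups is inherited by subgroups and by homomorphic images.

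First I would fix a normal subgroup $N$ of $G$ and a $p$-subgroup $\bar P$ of $G/N$, and reduce to the case that $\bar P$ is countable. Indeed, suppose $\bar P$ admitted an infinite strictly descending chain $\bar P=\bar P_0>\bar P_1>\bar P_2>\cdots$; choosing $x_i\in\bar P_i\setminus\bar P_{i+1}$ and setting $\bar Q:=\langle x_i:i<\omega\rangle$, one obtains a countable $p$-subgroup $\bar Q$ of $G/N$ for which $\bar Q\cap\bar P_0>\bar Q\cap\bar P_1>\bar Q\cap\bar P_2>\cdots$ is still strictly descending (since $x_i\in\bar Q\cap\bar P_i$ but $x_i\notin\bar Q\cap\bar P_{i+1}$), so $\bar Q$ would violate the descending chain condition on subgroups. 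Hence it suffices to prove that every countable $p$-subgroup of $G/N$ satisfies the descending chain condition on subgroups.

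So let $\bar P$ be a countable $p$-subgroup of $G/N$, write $\varphi\colon G\to G/N$ for the quotient map, and put $H:=\varphi^{-1}(\bar P)\le G$. Then $N\triangleleft H$, the group $H$ is locally finite (being a subgroup of $G$), and $H/N\cong\bar P$ is a countable $p$-group. Applying the easy observation preceding Lemma~2.7 to $H$ with its normal subgroup $N$, there is a $p$-subgroup $P$ of $H$ with $H=NP$; consequently $\bar P\cong H/N=NP/N\cong P/(P\cap N)$ exhibits $\bar P$ as a homomorphic image of the $p$-subgroup $P$ of $G$. By hypothesis $P$ satisfies the descending chain condition on subgroups, and this condition passes to $P/(P\cap N)$, a strictly descending chain of subgroups of $P/(P\cap N)$ pulling back, via the correspondence theorem, to a strictly descending chain of subgroups of $P$. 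Therefore $\bar P$ satisfies the descending chain condition on subgroups, as required.

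I do not expect a genuine obstacle here: the only point requiring a little care is the L\"owenheim--Skolem-style passage to a countable subgroup in the first step, and all of the substantive group theory is already packaged in the lifting observation recalled before Lemma~2.7. One should, however, make sure to state explicitly that the minimal condition on subgroups is inherited both by subgroups and by quotients, since both inheritances are used above.
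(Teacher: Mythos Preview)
Your argument is correct. The reduction to countable $\bar P$ is standard and cleanly executed (the key check, that $x_i\in(\bar Q\cap\bar P_i)\setminus(\bar Q\cap\bar P_{i+1})$, is exactly what is needed), and once $\bar P$ is countable you are simply invoking the lifting observation stated in the paper immediately before Lemma~2.7, together with the evident inheritance of the minimal condition under quotients.

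As for comparison with the paper: there is essentially nothing to compare, since the paper does not prove this lemma but merely cites Kegel--Wehrfritz~[5]. Your proof is thus more informative than the paper's treatment; it has the further virtue of using only ingredients already present in the paper (the lifting observation and elementary closure properties of the minimal condition), so it fits naturally into the surrounding exposition. If you want to match the paper's style you could simply cite~[5]; otherwise your write-up can stand as is, perhaps with the explicit remark you yourself flag, that the minimal condition on subgroups descends both to subgroups and to quotients.
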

\begin{proof} See [5].\end{proof}

\begin{thm} Let $G$ be a countably infinite locally
finite group with a series of subgroups $\{T_i:i<\omega\}$ each normal
in its successor whose factors are finite.  Assume
that $G$ contains a finite Sylow $p$-subgroup for some prime $p$.  Then $G$
contains a normal $p'$-subgroup of finite index.\end{thm}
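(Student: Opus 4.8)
The plan is to prove that $K:=O_{p'}(G)$ itself has finite index in $G$; write $|P|=p^n$. Using the given series and local finiteness we may write $G=\bigcup_{i<\omega}T_i$ with each $T_i$ finite and $T_i\triangleleft T_{i+1}$, and set $K_i:=O_{p'}(T_i)$. Since $K_i$ is characteristic in $T_i\triangleleft T_{i+1}$ we have $K_i\le K_{i+1}$, and the routine verifications show that $K=\bigcup_i K_i$ is a normal $p'$-subgroup of $G$, that $K=O_{p'}(G)$, and that $K\cap T_i=K_i$; thus $G/K=\bigcup_i(T_iK/K)$ with $T_iK/K\cong T_i/K_i$. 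Writing $\bar T_i:=T_iK/K$, viewed inside $\bar G:=G/K$, one also checks that $\bar T_i\triangleleft\bar T_{i+1}$, that $O_{p'}(\bar T_i)=1$, and (using $T_i\cap K_{i+1}=K_i$) that $|\bar T_i|$ divides $|\bar T_{i+1}|$. Since $\bar G$ is the ascending union of the $\bar T_i$, we get $[G:K]=\sup_i|\bar T_i|$, so it suffices to bound the orders $|\bar T_i|$.

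The crux is that the generalized Fitting subgroup $F^*(\bar T_i)=O_p(\bar T_i)E(\bar T_i)$ stabilises as $i\to\infty$ (note $F(\bar T_i)=O_p(\bar T_i)$ since $O_{p'}(\bar T_i)=1$). For the first factor: $O_p(\bar T_i)$ is characteristic in $\bar T_i\triangleleft\bar T_{i+1}$, so $O_p(\bar T_i)\le O_p(\bar T_{i+1})$, while $O_p(\bar T_i)$ is a $p$-subgroup of $\bar G$ and hence has order at most $p^n$; an ascending chain of finite $p$-groups of bounded order stabilises. For the layer: every component $L$ of $\bar T_i$ is a subnormal quasisimple subgroup of $\bar T_i$, hence a subnormal quasisimple subgroup of $\bar T_{i+1}$, i.e.\ a component of $\bar T_{i+1}$, so $E(\bar T_i)\le E(\bar T_{i+1})$. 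Moreover $\bar T_i$ has at most $n$ components: if some component $L$ had $|L/Z(L)|$ coprime to $p$, then since $Z(L)\le O_p(\bar T_i)$ is a $p$-group and $L$ is perfect, $L$ would be a simple $p'$-group, whose normal closure in $\bar T_i$ would be a nontrivial normal $p'$-subgroup, contradicting $O_{p'}(\bar T_i)=1$; hence every simple direct factor of $E(\bar T_i)/Z(E(\bar T_i))$ has order divisible by $p$, and since a Sylow $p$-subgroup of this quotient is a section of $\bar G$ and so has order at most $p^n$, there are at most $n$ such factors. An ascending chain of finite sets of at most $n$ components stabilises, so $E(\bar T_i)$, and with it $F^*(\bar T_i)$, stabilises; call the eventual value $F$.

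Once $F^*(\bar T_i)=F$ for all large $i$, the required bound is immediate. By the defining property of the generalized Fitting subgroup, $C_{\bar T_i}(F)=C_{\bar T_i}(F^*(\bar T_i))\le F^*(\bar T_i)=F$, so $C_{\bar T_i}(F)\le Z(F)$; since conjugation gives a homomorphism $\bar T_i\to\Aut(F)$ with kernel $C_{\bar T_i}(F)$, we obtain $|\bar T_i|\le|\Aut(F)|\cdot|Z(F)|$, a bound independent of $i$. Hence $\sup_i|\bar T_i|<\infty$, so $[G:K]<\infty$, and $K=O_{p'}(G)$ is the desired normal $p'$-subgroup of finite index.

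The main obstacle is the stabilisation argument just described. One cannot hope to bound $|\bar T_i|=[T_i:O_{p'}(T_i)]$ directly in terms of $|P|$: for $p=3$, the simple groups $\PSL_2(q)$ with $q\not\equiv\pm1\pmod{9}$ all have $O_{3'}=1$ and a Sylow $3$-subgroup of order $3$, yet are arbitrarily large. What makes the argument work is that along the normal series both $O_p(\bar T_i)$ and the set of components of $\bar T_i$ grow monotonically and are capped using the finite Sylow $p$-subgroup, so they — and hence $F^*(\bar T_i)$ — must eventually be constant.
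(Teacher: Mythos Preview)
The paper does not supply its own proof of this theorem; it simply cites \v{C}ernikov~[2]. Your argument is correct and self-contained, so there is no line-by-line comparison to make with the paper itself. The relevant contrast is with \v{C}ernikov's original 1960 proof, which necessarily differs from yours: the generalized Fitting subgroup and Bender's self-centralising property $C_G(F^*(G))\le F^*(G)$ were only developed around 1970, so \v{C}ernikov could not have used them. Your route is the cleaner and more conceptual one. Passing to $\bar G=G/O_{p'}(G)$ and the finite chain $\bar T_i$, you observe that both $O_p(\bar T_i)$ and the set of components of $\bar T_i$ are monotone along the normal chain and are bounded in size by the order $p^n$ of the finite Sylow $p$-subgroup (the bound on the number of components coming from the fact that each simple quotient $L/Z(L)$ has order divisible by $p$, else its normal closure would be a nontrivial normal $p'$-subgroup). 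Hence $F^*(\bar T_i)$ stabilises at some finite $F$, and Bender's theorem then gives the uniform bound $|\bar T_i|\le |Z(F)|\cdot|\Aut(F)|$. What your approach buys is a short, structural proof that isolates exactly where the finiteness of the Sylow $p$-subgroup is used; the price is quoting the (standard but nontrivial) $F^*$ machinery. \v{C}ernikov's argument is more elementary in its ingredients but correspondingly less transparent.
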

\begin{proof}
See [2].\end{proof}

\begin{thm} Let $G$ be a locally finite group.\itemize
\item $G$ contains an infinite abelian subgroup.
\item If $G$ is locally soluble, then the chief factors of $G$ are
elementary abelian groups.
\item If $G$ is locally nilpotent and satisfies the descending chain
condition on subgroups, then $G$ is virtually abelian.
\end{thm}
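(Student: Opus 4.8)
The three assertions are classical facts from the theory of locally finite groups, and in an application one would simply cite Kegel--Wehrfritz; but let me indicate the shape of each argument and where the real difficulty lies.

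\emph{The first assertion} (in which ``locally finite'' must of course be read as ``infinite locally finite'', since a finite group has no infinite subgroup) is the Hall--Kulatilaka theorem, and it is the deep part. The plan is: pass to a countable subgroup, so that $G=\bigcup_n G_n$ is an ascending union of finite groups; if $G$ has an infinite locally soluble subgroup one reduces to it, where an infinite abelian subgroup is obtained by a soft argument (using the second assertion, that chief factors are abelian, to build an ascending chain inside the Hirsch--Plotkin radical); so the crux is the case in which every locally soluble subgroup of $G$ is finite. There one invokes the Feit--Thompson odd order theorem to produce an involution $t\in G$, and then runs a Brauer--Fowler style analysis of $C_G(t)$: if every proper subgroup of $G$ containing $t$ were finite one would bound $|G|$, a contradiction, so some such subgroup is infinite, and iterating yields either an infinite locally soluble subgroup or a contradiction. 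This last step is the main obstacle, and I would not attempt to reprove it, citing Hall--Kulatilaka (and Feit--Thompson) instead.

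\emph{For the second assertion}, let $H/K$ be a chief factor of $G$, so that $M:=H/K$ is a minimal normal subgroup of $\overline G:=G/K$; then $M$ is characteristically simple and locally soluble. First one shows $M$ is abelian: the derived subgroup $M'$ is characteristic in $M$, hence normal in $\overline G$, so $M'=1$ or $M'=M$ by minimality, and the perfect case $M'=M$ is excluded because, by the structure theory of chief factors of locally finite groups, a characteristically simple chief factor is either elementary abelian or a direct product of isomorphic non-abelian finite simple groups, the second option contradicting local solubility of $M$. So $M$ is abelian, hence, being periodic, the direct sum of its $p$-primary components $M_p$; each $M_p$ is characteristic, so $M=M_p$ for a single prime $p$. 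Finally $\{x^p:x\in M\}$ is characteristic and cannot equal $M$ (else $M$ would be a non-trivial divisible periodic abelian group, which is never characteristically simple, a Pr\"ufer group having its finite subgroups as a chain of characteristic subgroups); hence it is trivial and $M$ has exponent $p$, that is, $M$ is elementary abelian.

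\emph{For the third assertion}, let $G$ be locally nilpotent satisfying the minimal condition on subgroups. Every cyclic subgroup then satisfies the minimal condition, hence is finite, so $G$ is periodic; a periodic locally nilpotent group is the direct product of its primary components, and the minimal condition forces all but finitely many of these to be trivial and each of the rest again to satisfy the minimal condition. One is thus reduced to a locally finite $p$-group with the minimal condition, which is a Chernikov group by a classical theorem of Chernikov: its socle is a non-trivial elementary abelian $p$-group with the minimal condition, hence finite, and one runs an induction on the order of the socle using that its centralizer has finite index in $G$. A Chernikov group has an abelian subgroup of finite index, so $G$ is virtually abelian, and reassembling the finitely many primes preserves this. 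The only non-routine input here is the theorem that a locally finite $p$-group with the minimal condition is Chernikov, which I would cite.
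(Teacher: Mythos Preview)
The paper's own proof of this theorem consists entirely of citations: Hall--Kulatilaka and Kargapolov for the first part, and Kegel--Wehrfritz (Corollary~1.B.4 and Proposition~1.G.14) for the second and third. Your proposal likewise defers to these sources at the hard points, but supplies expository sketches, so in spirit the two agree and your write-up is strictly more informative.

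One correction to your sketch for the second assertion: the dichotomy you invoke---that a chief factor of a locally finite group is either elementary abelian or a direct product of isomorphic non-abelian finite simple groups---is not true for arbitrary locally finite groups (an infinite simple locally finite group such as $\mathrm{PSL}_2(\overline{\mathbb{F}_p})$ is a chief factor of itself and is neither). So your exclusion of the perfect case $M'=M$ does not go through as written; indeed there exist perfect, characteristically simple, locally finite, locally nilpotent groups (McLain's group), so perfectness is not ruled out by local solubility alone. The correct route, which is what Kegel--Wehrfritz do, uses the local structure: write $G=\bigcup G_i$ with $G_i$ finite soluble and approximate the chief factor $H/K$ by sections of the $G_i$, whose chief factors are elementary abelian; a compactness/inverse-limit argument then forces $H/K$ to be elementary abelian. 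Since the paper simply cites the reference here, you would be on equally firm ground doing the same; but if you keep a sketch, replace the false dichotomy with this approximation argument.
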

\begin{proof} The first part is a well-known result of 
Hall--Kulatilaka and Kargapolov.  
The second part is Corollary 1.B.4 in [5].  The third part is
Proposition 1.G.14 in [5].\end{proof}

{\bf Remark} This theorem 
immediately implies that a locally soluble locally finite simple group
must have prime order. Consequently, a periodic locally soluble group
cannot involve (i.e.\ have a section isomorphic to) 
an infinite simple group.

\begin{thm} Let $G$ denote a locally finite group and $p$ a
prime. Assume that the 
$p$-subgroups of $G$ satisfy the descending chain condition on subgroups.
Then the factor $G/O_{p'p}(G)$ is finite if and only if $G$ does not
contain any section which is isomorphic to the an infinite simple
group containing elements of order $p$.\end{thm}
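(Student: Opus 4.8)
The statement is an equivalence, and one direction is essentially trivial: if $G$ contains a section isomorphic to an infinite simple group with an element of order $p$, then no finite quotient of the form $G/N$ can kill that section (a simple group has no proper quotients, so the section survives in $G/N$, forcing $G/N$ to be infinite), and in particular $G/O_{p'p}(G)$ is infinite. So the work is all in the forward direction: assuming $G$ has no such bad section, show $G/O_{p'p}(G)$ is finite.

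For the non-trivial direction, I would argue by contradiction: suppose $N:=O_{p'p}(G)$ has infinite index in $G$. Set $\bar G := G/O_{p'}(G)$, so that $\bar N := N/O_{p'}(G) = O_p(\bar G)$ is the maximal normal $p$-subgroup, and $\bar G/\bar N$ is still infinite. By Lemma 2.9 (applied to pass the descending chain condition on $p$-subgroups through the quotient by $O_{p'}(G)$), the $p$-subgroups of $\bar G$ still satisfy the descending chain condition; in particular $\bar N = O_p(\bar G)$ satisfies it. The plan is to examine a minimal ``bad'' configuration inside $\bar G/\bar N$. Since $\bar G/\bar N$ is an infinite locally finite group with trivial $O_p$ (by maximality of $\bar N$) and still no bad section, I want to extract from it a locally finite simple section, or reduce to the case where $\bar G/\bar N$ is itself simple. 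The Remark after Theorem 2.2 rules out \emph{soluble} infinite simple sections, but $G$ is only assumed locally finite here, not locally soluble, so I cannot invoke that; instead I must use the hypothesis directly, namely that there is no infinite simple section containing an element of order $p$. The key tension to exploit: $\bar G/\bar N$ has trivial $O_p$, yet (if it is to be infinite) it should, via the descending chain condition on $p$-subgroups, be forced to have a large normal $p$-subgroup or else manufacture an infinite simple section with an element of order $p$ --- either horn of which is a contradiction.

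Concretely, the heart of the argument should be: in an infinite locally finite group $H$ with trivial $O_p(H)$ whose $p$-subgroups satisfy the minimal condition, if $H$ has an element of order $p$ then $H$ has a section isomorphic to an infinite simple group containing an element of order $p$. To prove this I would use the structure theory of locally finite groups: pass to a countable infinite locally finite subgroup (Theorem 2.2's first part, or a Löwenheim--Skolem/union-of-finite-subgroups argument, gives enough room), and analyze the lattice of normal subgroups. If $H$ had a minimal normal subgroup that is infinite and simple, we'd be done (its $p$-subgroups still satisfy the chain condition, and we must locate an order-$p$ element, using $O_p=1$ to ensure the socle is not a $p$-group); if instead all the relevant sections are too small, the descending chain condition on $p$-subgroups combined with local finiteness should force a nontrivial normal $p$-subgroup, contradicting $O_p(H)=1$. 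The main obstacle, and the step I expect to be delicate, is precisely this dichotomy --- manufacturing the infinite simple section with the prescribed torsion. It is here that one needs care about whether the ``simple section'' one extracts actually contains an element of order $p$ rather than being a $p'$-group; the hypothesis $O_p=1$ (so that $p$ genuinely ``sees'' the non-$p$-structure) and a careful choice of section, likely via a chief series of a large finite subgroup and a compactness/direct-limit argument, is what makes it work. Once that lemma is in hand, applying it to $H = \bar G/\bar N$ yields a bad section of $\bar G$, hence of $G$, contradicting the hypothesis; therefore $O_{p'p}(G)$ has finite index, completing the proof.
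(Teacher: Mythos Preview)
The paper does not supply its own proof of this theorem: its entire proof reads ``See [5]'' (Kegel--Wehrfritz, \emph{Locally finite groups}). So there is no in-paper argument to compare your proposal against; the result is quoted from the literature, and your outline is broadly the shape of how it is proved there.

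Two remarks on the proposal itself. First, the direction you call ``essentially trivial'' is not quite as immediate as you suggest: a section $A/B$ of $G$ does not automatically ``survive in $G/N$''. One must split into cases according to whether $B(A\cap N)$ equals $B$ or $A$ (simplicity of $A/B$); in the first case $AN/N$ is infinite and you are done, but in the second case $A/B$ becomes a section of $N=O_{p'p}(G)$, and you then need the minimal condition on $p$-subgroups to rule out an infinite simple $p$-group section (since the $p'$-by-$p$ structure of $N$ forces such a section to be a $p$-group or a $p'$-group). This is routine but not a one-liner.

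Second, in the substantive direction, the step you yourself flag as delicate --- extracting from an infinite locally finite group $H$ with $O_p(H)=1$ and min-$p$ an infinite simple section containing an element of order $p$ --- is precisely the non-trivial content of the Kegel--Wehrfritz theorem, and your plan does not yet supply the mechanism (the standard route goes through a careful analysis of the finite Sylow structure under min-$p$ together with an inverse-limit/chief-series argument to locate the simple section and pin down its $p$-torsion). As it stands your proposal is an accurate road map but not a proof; since the paper merely cites [5], that is all the comparison that can be made.
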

\begin{proof} See [5].\end{proof}
 
If $G$ is locally (finite-soluble) and $p$ a prime such that the
$p$-subgroups of $G$ satisfy the descending chain condition on
subgroups, then the normal subgroup $O_{p'p}(G)$ has finite index in
$G$ by theorems 2.1.10 an 2.1.11.

Recall that a group $G$ is callled Cernikov if it is a
virtually abelian group satisfying the descending chain condition on
subgroups. Examples include groups with a finite series
whose factors are either finite or Prufer groups.  By a celebrated 
result of Cernikov [2], finite direct products of
finite cyclic groups and Prufer groups form precisely the class soluble
Cernikov groups.

\begin{thm} Let $G$ be a Cernikov group. Then $G$ is periodic, has only finitely many elements of any given
finite order, and contains a minimal characteristic 
subgroup of finite index which is divisible abelian.
\end{thm}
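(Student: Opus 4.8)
The plan is to fix an abelian normal subgroup $A\trianglelefteq G$ of finite index (available by the definition of a Cernikov group) and to read off everything from $A$ together with the minimal condition on subgroups. First I would observe that $A$, satisfying the descending chain condition, is torsion — an element of infinite order would generate a copy of $\Z$, hence an infinite descending chain of subgroups — and that, for the same reason, only finitely many primes occur in the primary decomposition of $A$ and each primary component has finite rank. By the structure theory of abelian groups with the minimal condition this yields $A=F\times D$ with $F$ finite and $D=C_{p_1^\infty}\times\dots\times C_{p_r^\infty}$ a finite direct product of Pr\"ufer groups; here $D$ is the maximal divisible subgroup of $A$, hence characteristic in $A$ and therefore normal in $G$. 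Since $[G:A]<\infty$ and $A$ is torsion, $G$ is periodic: for every $g\in G$ one has $g^{[G:A]}\in A$, an element of finite order.

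Next I would produce the minimal subgroup of finite index directly. The minimal condition gives a minimal element $N$ in the family $\mathcal F$ of finite-index subgroups of $G$; for any $M\in\mathcal F$ the intersection $N\cap M$ again has finite index and is contained in $N$, so $N\cap M=N$, i.e.\ $N\leq M$. Thus $N=\bigcap\mathcal F$, and $N$ is characteristic because $\Aut(G)$ permutes $\mathcal F$. From $A\in\mathcal F$ we get $N\leq A$, so $N$ is abelian; from $[A:D]=|F|<\infty$ we get $D\in\mathcal F$, so $N\leq D$; and a divisible abelian group has no proper subgroup of finite index (if $[D:M]=k$ then $kD\subseteq M$ while $kD=D$), so $N=D$. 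Hence $D$ is the unique minimal subgroup of finite index; it is characteristic, of finite index, and divisible abelian, which is the last assertion.

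It remains to bound the sets of torsion elements. In $D=C_{p_1^\infty}\times\dots\times C_{p_r^\infty}$ the elements killed by $n$ form a subgroup of order $\prod_{p\mid n}p^{\,r_p v_p(n)}$, which is finite, so $D$ has only finitely many elements of any given order. To transfer this to $G$ I would work inside $C:=C_G(D)$, which has finite index (it contains $A$) and is characteristic (since $D$ is); in $C$ the subgroup $D$ is central, so $(cd)^n=c^nd^n$ for $c\in C$, $d\in D$, whence $(cd)^n=1$ forces $c^n=d^{-n}\in D$ and then $d$ is confined to a single coset of the finite group $D[n]$; summing over the $[C:D]$ cosets of $D$ in $C$ bounds the number of solutions of $x^n=1$ in $C$. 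The step I expect to be the main obstacle is the final passage from $C$ back to $G$: one must control the finitely many cosets of $C$ in $G$ — equivalently the induced action of the finite group $G/C$ on $D$ — and only after using the structure of that finite extension does the count for $G$ itself follow.
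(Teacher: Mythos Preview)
Your arguments for periodicity and for the identification of the unique minimal finite-index subgroup with the divisible part $D$ are correct and self-contained; the paper itself does not argue these points but simply refers to Kegel--Wehrfritz, so on those two assertions you have supplied more than the paper does, by an elementary route.

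The obstacle you isolate in the final paragraph, however, is not a technicality to be overcome: the passage from $C=C_G(D)$ to $G$ is impossible in general, because the second assertion of the theorem is \emph{false} as stated. For an odd prime $p$ let $G=C_{p^\infty}\rtimes\langle t\rangle$ with $t^2=1$ and $t$ acting by inversion. This group is virtually abelian (index $2$) and satisfies the minimal condition (a proper subgroup is either contained in the well-ordered chain of subgroups of $C_{p^\infty}$ or is finite), hence is Cernikov; yet $(td)^2=t^2d^{-1}d=1$ for every $d\in C_{p^\infty}$, so $G$ has infinitely many involutions. In this example $C_G(D)=D$, which is exactly why your counting argument succeeds for $C$ and then halts. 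Your instinct that one would need control over the action of $G/C$ on $D$ was correct---the point is that nothing in the hypotheses provides such control.

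What the paper's one-line proof actually justifies is only the abelian case (``an abelian group with the descending chain condition has only finitely many elements of any given finite order''), which is the content of Lemma~2.8. If you look at how the theorem is invoked later, only the divisible-by-finite structure (your part three) and Lemma~2.8 for abelian groups are ever used, so the misstatement does no damage downstream; but as written the middle clause should be read as a property of the divisible radical $D$, or of $C_G(D)$, rather than of $G$.
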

\begin{proof} The second part follows from the first part as
an abelian group with the descending chain condition on subgroups has
only finitely many elements of any given finite order; for details and
for the first and third parts see [5].\end{proof}

The centralizer-connected component $G^{cc}$ of an mc-group $G$ 
is the intersection of all centralizers of finite index. 
By the descending chain condition on centralizers, 
$G^{cc}$ is equal to a finite sub-intersection, 
hence is definable and has finite index
in $G$.

\begin{lemma} Let $G$ be a periodic locally soluble sub-icc group 
such that for any prime $p$, any $p$-subgroup of $G$ satisfies the 
minimal condition on subgroups. Then $G$ is virtually abelian.
\end{lemma}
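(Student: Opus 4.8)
The plan is to argue by induction on the maximal length $k$ of chains of centralizers in $G$ (which is finite, and bounded, by the sub-icc hypothesis via Lemma 2.2 and the Remark following Lemma 2.3). The base case is $k=1$, where $G$ is abelian. For the inductive step, first invoke Lemma 2.6: since $G$ is periodic and locally soluble and sub-icc, it has a nontrivial abelian normal subgroup, and in fact we may take the definable abelian normal subgroup $N := Z(C_G(A))$ for a suitable abelian normal $A$. The key point is that by hypothesis every $p$-subgroup of $G$ satisfies the minimal condition on subgroups; applying this to the $p$-parts of the abelian normal subgroup $N$, Lemma 2.9 shows that $N$ has only finitely many elements of any given finite order, so $N$ is a Cernikov group, and by Theorem 2.1.12 it contains a characteristic divisible abelian subgroup $D$ of finite index in $N$.

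Next I would pass to the quotient $G/D$. Since $D$ is definable (being a characteristic subgroup of the definable group $N$, and of the form described in Theorem 2.1.12), $G/D$ is again a countable periodic locally soluble sub-icc group by Lemma 2.2, and by Lemma 2.10 its $p$-subgroups still satisfy the minimal condition on subgroups for every prime $p$. Moreover $C_{G/D}(D/D) = C_G(D)/D$ has finite index in $G/D$ (because $D$ is Cernikov-abelian and its automorphism group acting compatibly is constrained — more precisely, $\mathrm{Aut}(D)$ for $D$ a finite product of Prüfer groups has a finite-index abelian, hence the centralizer of $D$ in $G$ has finite index since $G/C_G(D)$ embeds in the finite automorphism group of the finite-order-structure, using Theorem 2.1.12 that $D$ has finitely many elements of each order). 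Here the length of the longest chain of centralizers has strictly dropped: centralizers in $G/D$ that contain $D$ correspond to centralizers in $G$ containing $D$, and since $G' \le C_G(\text{last derived term})$-type arguments (as in Lemma 2.4) force $D \le Z(G^{cc})$ after finite-index reduction, the chain length in $G/D$ is at most $k-1$. So by the induction hypothesis $G/D$ is virtually abelian; combined with $D$ abelian, standard extension arguments (a finite-index abelian subgroup of $G/D$ pulls back to a finite-index metabelian subgroup $M$ of $G$, and then one uses that $M$ has an abelian normal subgroup with abelian quotient together with the descending chain condition on the relevant $p$-subgroups, via Theorem 2.1.6(iii), to locate a finite-index abelian subgroup of $M$) give that $G$ is virtually abelian.

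The main obstacle I anticipate is the bookkeeping that makes the induction on centralizer-chain length actually decrease when we quotient by $D$: one must be careful that $D$ (or a finite-index reduction of $G$) sits centrally, so that $C_G(x)/D$-type centralizers genuinely form shorter chains in $G/D$, rather than merely no-longer ones. The clean way to handle this is to first replace $G$ by its definable finite-index subgroup $G^{cc}$ (the centralizer-connected component), absorb the finite-index issues there, note $D \cap G^{cc}$ is central in $G^{cc}$ after possibly shrinking, and then the quotient by this central divisible subgroup strictly lowers $k$ because a central subgroup is contained in every centralizer, so a strictly ascending chain of centralizers modulo $D$ lifts to a strictly ascending chain in $G$ that could be extended by $C_G(1) = G$ at the top only once — giving the strict inequality. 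The periodicity and the minimal condition on $p$-subgroups are exactly what is needed to know that this central divisible piece is nontrivial (via Lemma 2.9 and Theorem 2.1.12), which is what powers the induction.
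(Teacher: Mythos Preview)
Your inductive scheme has a genuine gap at the step where you claim the maximal length of centralizer chains strictly drops in $G/D$. Even when $D$ is central, a centralizer $C_{G/D}(\bar a)$ pulls back to $\{g\in G:[g,a]\in D\}$, which is \emph{not} a centralizer in $G$; so a strictly descending chain of centralizers in $G/D$ does not lift to a chain of centralizers in $G$, and there is no reason for $k$ to decrease. The paper's own Remark after Lemma~2.3 warns precisely against this: there are mc-groups $G$ such that $G/Z(G)$ has arbitrarily long (or even infinite) chains of centralizers. Your attempted fix via $G^{cc}$ does not address this, because the lifting still fails. Two subsidiary problems compound this: the divisible part $D$ of the Cernikov group $N$ is characteristic but not obviously \emph{relatively definable} (divisibility is not first-order, and the subgroups $N^{p^n}$ are not uniformly definable as $n$ varies), so Lemma~2.2 does not immediately apply to $G/D$; and your final step, deducing virtually abelian from metabelian-by-finite plus min on $p$-subgroups, is itself essentially the lemma for metabelian groups and is not covered by Theorem~2.2(3), which requires local nilpotence.

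The paper avoids induction on $k$ altogether. Instead it proves a ``minimal counterexample'' statement among centralizers: if every \emph{proper} centralizer in $G$ is virtually abelian, then so is $G$ (Claim~2). This is established by reducing to $G=G^{cc}$, showing $Z(G)\neq 1$ via Lemmas~2.6 and~2.8, passing to the centralizer-connected component $H/Z(G)$ of $G/Z(G)$, and locating $a\in Z_2(H)\setminus Z(G)$; then the commutation map $f_a\colon H\to Z(H)$, $h\mapsto[h,a]$, has kernel $C_H(a)$ (virtually abelian by the hypothesis on proper centralizers) and image of bounded exponent inside $Z(H)$, hence finite by Lemma~2.8. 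The lemma then follows by taking a minimal non-virtually-abelian centralizer (using min on centralizers) and applying Claim~2 to it. The commutation-map trick with finite image is the missing idea in your approach.
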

to prove this We first prove the following.
\begin{claim} If every proper centralizer in $G$ is virtually
abelian, then $G$ is virtually abelian.\end{claim}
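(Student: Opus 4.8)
The plan is a proof by contradiction: assume $G$ is not virtually abelian. The first step is a reduction to the ``connected'' case. The subgroup $G^{cc}$, being the intersection of all centralizers of finite index, equals by the sub-icc chain condition a finite sub-intersection $C_G(b_1)\cap\dots\cap C_G(b_N)$ of finite-index single-element centralizers, and $[G:G^{cc}]<\infty$. If $G^{cc}\neq G$ then some $b_k\notin Z(G)$, so $C_G(b_k)$ is a proper centralizer, hence virtually abelian by hypothesis; since $G^{cc}\le C_G(b_k)$ has finite index in $G$, the group $G$ would be virtually abelian, a contradiction. Hence we may assume $G=G^{cc}$, all the hypotheses being inherited. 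In this situation $Z(G)=C_G(G)$ is a relatively definable proper subgroup, hence has infinite index, every non-central element has an infinite-index centralizer, and moreover $G$ virtually abelian would already force $G$ abelian (apply the $G^{cc}$ argument to the $G$-core of an abelian subgroup of finite index); so it suffices to derive a contradiction from the assumption that $G$ is non-abelian.

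Next I would isolate an abelian normal subgroup for $G$ to act on. By Lemma 2.6 and Zorn's lemma $G$ has a maximal abelian normal subgroup $A\neq 1$, and $A$ is relatively definable: $Z(C_G(A))$ is relatively definable, abelian, normal and commutes with $A$, so equals $A$ by maximality. Put $C:=C_G(A)$; it is relatively definable and normal, contains $A$, and $Z(C)=A=C_G(C)$, so that $G/A$ embeds in $\Aut(C)$. Either $C=G$, i.e.\ $A=Z(G)$; or $C\neq G$, in which case $C$ is a proper centralizer, hence virtually abelian, and of infinite index in $G$ (otherwise $G$ would be virtually abelian).

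The case $A=Z(G)$ I would reduce to the same kind of action-analysis. Pass to $\bar G=G/Z(G)$, which by Lemmas 2.2 and 2.9 is again a periodic locally soluble sub-icc group with the minimal condition on $p$-subgroups, and is non-abelian; by Lemma 2.6 it has a nontrivial abelian normal subgroup $B/Z(G)$, and $B$ is non-abelian by maximality of $A=Z(G)$, so $B$ is nilpotent of class $2$ with $Z(B)=Z(G)$. The commutator then gives a $G$-invariant non-degenerate alternating biadditive form $\beta\colon B/Z(G)\times B/Z(G)\to [B,B]\le Z(G)$. Since each $p$-component of $[B,B]$ is Cernikov, the arithmetic fact that a biadditive map out of a $p$-divisible group into a group with Cernikov $p$-component vanishes forces $B/Z(G)$ to be reduced; and were infinitely many primes to occur in $B/Z(G)$, one could build a separate descending chain of centralizers for each prime (as in a restricted direct product of Heisenberg groups), contradicting sub-icc. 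Hence $B/Z(G)$ is finite, and (climbing the upper central series, whose terms are relatively definable by Lemma 2.3, and repeating) one is in every sub-case brought back to studying the action of a section of $G$ on a relatively definable abelian normal subgroup all of whose $p$-components are Cernikov.

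This last point is the heart of the argument and the step I expect to be the real obstacle: one must show that a periodic, locally soluble, sub-icc group acting on an abelian group whose $p$-components are all Cernikov cannot fail to be virtually abelian. Here I would use the Remark that a periodic locally soluble group involves no infinite simple group, in order to exclude the ``large'' linear sections of $\Aut(A)\le\prod_p\Aut(A_p)$ (each $\Aut(A_p)$ being linear over $\Z_p$); Lemma 2.7 together with the fact, recorded after the discussion of Cernikov groups, that $O_{p'p}(G)$ has finite index for every prime $p$, to reduce modulo $O_{p'}$ to the case that $G/O_{p'}(G)$ is a Cernikov $p$-group; the theorems of this section on locally finite groups with a series of finite factors; and the sub-icc chain condition itself, which forbids the long centralizer chains a genuinely non-abelian such action would create. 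Once $G/A$, and hence $G$, is shown to be virtually abelian, the contradiction is reached and the Claim follows.
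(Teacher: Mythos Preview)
Your reduction to the centralizer-connected case is correct and matches the paper. After that, however, there is a genuine gap: you never really use the hypothesis that every \emph{proper} centralizer is virtually abelian. From the case $A=Z(G)$ onward you are in effect trying to prove the full Lemma~2.10 (that $G$ is virtually abelian from the Cernikov-$p$-subgroup hypothesis alone), which is why you end up at the admittedly incomplete ``heart'' step about actions on abelian groups with Cernikov $p$-parts. The case $C=C_G(A)\ne G$ is also left hanging: you record that $C$ is virtually abelian of infinite index and then move on without extracting anything from it.

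The paper's argument is much shorter and uses the hypothesis exactly once, at the decisive moment. After reducing to $G=G^{cc}$, one shows $Z(G)\ne 1$: a nontrivial element of an abelian normal subgroup (Lemma~2.6) has only finitely many conjugates by Lemma~2.8, hence is central by connectedness. Take the connected component $H/Z(G)$ of $G/Z(G)$ and a nontrivial abelian normal $J/Z(G)\le H/Z(G)$; the same finiteness-of-conjugates argument gives $J\le Z_2(H)$. Now pick any $a\in J\setminus Z(G)$. Because $a\in Z_2(H)$, the map $f_a\colon x\mapsto[x,a]$ is a homomorphism $H\to Z(H)$. Its kernel is $C_H(a)=C_G(a)\cap H$, and $C_G(a)$ is a \emph{proper} centralizer, hence virtually abelian \emph{by hypothesis}. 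Its image lies in $Z(H)$, has exponent dividing the order of $a$, and $Z(H)$ has only finitely many elements of each order (Lemma~2.8), so the image is finite. Thus $C_H(a)$ has finite index in $H$, and $H$ (hence $G$) is virtually abelian.

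So the missing idea is the commutator homomorphism out of $Z_2$: it converts the hypothesis on proper centralizers directly into ``virtually abelian kernel with finite image''. Your biadditive-form and action-analysis route may be salvageable as an alternative proof of the full lemma, but as a proof of this conditional Claim it is both incomplete and far more than is needed.
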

\begin{proof} 
Suppose that every proper subgroup of $G$ which is of the
form of the centralizer of some subset of $G$ is virtually abelian.
Note that $G^{cc}$ 
has finite index in $G$ by the descending chain
condition on centralizers (so in particular $G^{cc}$ is infinite).  
If $G^{cc}$ is a proper subgroup of $G$,
then it is virtually abelian by assumption. So we may assume that
$G^{cc}=G$, i.e.\ $G$ is centralizer-connected. Thus 
$G$ has no proper non-trivial centralizer of finite index.  

First we observe that $G$ has non-trivial centre.  For, as a periodic
locally soluble sub-icc group, by Lemma 2.6, $G$ 
contains a non-trivial normal
abelian subgroup $A$. Choose an arbitrary non-trivial element
$g$ in $A$. For any prime $p$, the $p$-parts of $A$ satisfies
the descending chain condition on subgroups. By 
Lemma 2.8, the abelian group 
$A$ has only finitely many elements of any given finite order (note that 
the fact applies to finite index subgroup). 
Thus $g$ has only finitely many $G$-conjugates, whence $C_G(g)$ has finite
index in $G$.  Since $G$ is centralizer-connected, we deduce that $g\in
Z(G)$ as required.

Now we consider the factor 
$G/Z(G)$. Let $H/Z(G)$ denote the centralizer-connected component of
the quotient $G/Z(G)$. So $H$ has finite index in $G$ by the
descending chain condition on centralizers. Clearly to prove that $G$
is virtually abelian, it suffices to prove that $H$ is virtually
abelian, and this is what we now do.

The factor $H/Z(G)$ is a periodic locally soluble sub-icc group. By Lemma 2.6, $H/Z(G)$ contains a non-trivial 
normal abelian subgroup $J/Z(G)$. \end{proof}

We now claim the following.
\begin{claim} $J$ is contained in the second iterated centre
$Z_2(H)$ of $H$.\end{claim}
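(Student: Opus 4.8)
The plan is to show directly that every element of $J$ becomes central modulo $Z(G)$ in $H$. Note first that $Z(G)\le H$ (since $H/Z(G)$ is a subgroup of $G/Z(G)$) and that $Z(G)$ centralises $G$, so $Z(G)\le Z(H)$; hence it suffices to prove $[J,H]\le Z(G)$, i.e.\ that $J/Z(G)$ lies in the centre of $H/Z(G)$, and this is exactly the assertion $J\le Z_2(H)$. So I fix $j\in J$, write $\bar j:=jZ(G)\in J/Z(G)$, and aim to show $\bar j\in Z(H/Z(G))$.

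First I would control the abelian normal subgroup $J/Z(G)$ of $H/Z(G)$. Since $G$ is periodic and locally soluble it is locally finite, and by hypothesis every $p$-subgroup of $G$, hence of $H\le G$, satisfies the minimal (descending chain) condition on subgroups; by Lemma 2.9 the same then holds for every $p$-subgroup of the homomorphic image $H/Z(G)$. In particular, for each prime $p$ the $p$-component $(J/Z(G))_p$ — a genuine subgroup, as $J/Z(G)$ is abelian — satisfies the descending chain condition, so Lemma 2.8 applies to the abelian normal subgroup $J/Z(G)$ of the periodic group $H/Z(G)$ and gives that $J/Z(G)$ has only finitely many elements of any given finite order. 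Now $\bar j$ has finite order, say $n$, because $G$ is periodic; every $H/Z(G)$-conjugate of $\bar j$ again has order $n$ and, by normality of $J/Z(G)$, lies in $J/Z(G)$. Therefore $\bar j$ has only finitely many conjugates in $H/Z(G)$, equivalently $C_{H/Z(G)}(\bar j)$ has finite index in $H/Z(G)$.

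To conclude I would invoke centralizer-connectedness. By construction $H/Z(G)=(G/Z(G))^{cc}$, and $G/Z(G)$ is sub-icc — hence satisfies the chain condition on centralizers — because $Z(G)$ is definable relative to $G$ by Lemma 2.3 and a quotient of a sub-icc group by a relatively definable normal subgroup is again sub-icc by Lemma 2.2. For any group $K$ satisfying the chain condition on centralizers, $[K:K^{cc}]$ is finite and $K^{cc}$ is itself centralizer-connected: if $C_{K^{cc}}(S)=C_K(S)\cap K^{cc}$ has finite index in $K^{cc}$, then, since $[K:K^{cc}]<\infty$, the centralizer $C_K(S)$ has finite index in $K$, whence $C_K(S)\supseteq K^{cc}$ by definition of $K^{cc}$ and so $C_{K^{cc}}(S)=K^{cc}$. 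Applying this with $K=G/Z(G)$ and $S=\{\bar j\}$, the finite-index centralizer $C_{H/Z(G)}(\bar j)$ must equal $H/Z(G)$, i.e.\ $\bar j\in Z(H/Z(G))$. As $j\in J$ was arbitrary, $J/Z(G)\le Z(H/Z(G))$, that is $[J,H]\le Z(G)\le Z(H)$, which is precisely $J\le Z_2(H)$.

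The only genuinely non-formal point is the passage in the middle: recognising that Lemma 2.8 (finitely many elements of each order in $J/Z(G)$) yields, for each element, a finite-index centralizer in $H/Z(G)$, which then cannot be proper because $H/Z(G)$ is a centralizer-connected component. Everything else — local finiteness of $G$, descent of the chain condition on $p$-subgroups to the quotient $H/Z(G)$ via Lemma 2.9, relative definability of $Z(G)$, and the elementary fact that $(K^{cc})^{cc}=K^{cc}$ — is routine bookkeeping; one should just be slightly careful that Lemma 2.9 is applied to $H$ (which is locally finite) and to the honest quotient $H/Z(G)$.
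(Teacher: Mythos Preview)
Your proof is correct and follows essentially the same route as the paper: use Lemma~2.8 to see that $J/Z(G)$ has only finitely many elements of each finite order, deduce that each $\bar j$ has a finite-index centralizer in $H/Z(G)$, and conclude centrality from the centralizer-connectedness of $H/Z(G)$. You supply a little more justification than the paper does---explicitly noting $Z(G)\le Z(H)$, invoking Lemma~2.9 for the passage to the quotient, and verifying $(K^{cc})^{cc}=K^{cc}$---but the argument is the same.
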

The $p$-parts of $J/Z(G)$ satisfy the descending
chain condition on subgroups (since this holds for $J$ and is inherited by homomorphic images). 
By Lemma 2.8, the abelian group
$J/Z(G)$ has only finitely many elements of any given finite
order (again the fact applies to a finite index subgroup). 
Choose an arbitrary non-trivial 
element $aZ(G)\in J/Z(G)$. Then 
since $J/Z(G)$ is normal in $G/Z(G)$, we infer that 
$aZ(G)$ has only finitely many 
$H/Z(G)$-conjugates. Hence the centralizer 
$C_{H/Z(G)}(aZ(G))$ has finite index in $H/Z(G)$. Since $H/Z(G)$ is
centralizer-connected, we deduce that $aZ(G)\in Z(H/Z(G))$ which shows that
$J$ is contained in the second centre $Z_2(H)$. This proves Claim 3.

We now complete the proof of the Claim 2.  Claim 3 shows 
that commutation $f_a:=(x\rightarrow [x,a])$ with an element $a\in
J$ induces a homomorphism from $H$ into the centre $Z(H)$. Note that 
the kernel
$\ker(f_a)$ equals $C_H(a)$. Since $J/Z(G)$ is non-trivial, there is
an element $aZ(G)\in J/Z(G)$. So $a\in J-Z(G)$. Therefore $C_G(a)$
is proper in $G$, and hence is virtually abelian. So $C_H(a)$ is also
virtually abelian because $C_H(a)=C_G(a)\cap H$. 
Now we consider the image $f_a(H)$ which is contained in the centre
$Z(H)$. The $p$-parts of the centre $Z(H)$ 
satisfy the descending chain condition on
subgroups. Therefore by Lemma 2.8 , $Z(H)$ and
hence $f_a(H)$ have only finitely many elements of any given finite
order. 
Now there is an $n_a$ such that $a^{n_a}=1$. But as $a$ is in
the second iterated centre of $H$, we deduce that $[h,a]$ is central
in $H$ for any $h\in H$. 
In particular $[h,a]$ commutes with $a^j$ for any
$j<\omega$ (for any $h\in H$). Therefore for any $h\in H$ we have 
$$1=[h,a^{n_a}]=[h,a(a^{{n_a}-1})]=[h,a^{{n_a}-1}][h,a][h,a,a^{{n_a}-1}]=
[h,a^{{n_a}-1}][h,a].$$  
Repeating this $n_a-1$ many times we deduce that
$1=[h,a^{n_a}]=[h,a]^{n_a}$. Therefore 
the image $f_a(H)$ has finite exponent $n_a$. But 
$f_a\in Z(H)$, and $Z(H)$ has only finitely many elements
of any given finite order. Therefore the image $f_a(H)$ must be
finite. This shows that $H$ (and hence $G$) 
is virtually abelian, completing the
proof of Claim 2.
We now complete the proof of the lemma.  Consider the family
$\mathcal{F}:=\{C_i:i\in I\}$ consisting of 
subgroups of $G$ each of which is not virtually abelian and has  
the form of the centralizer of some subset of
$G$. We argue by contradiction to show that $\mathcal{F}$ is empty.
Suppose that $\mathcal{F}$ is non-empty. Then 
by the descending chain condition on centralizers, it contains
a minimal element, say $C_{i_0}=C_G(A_{i_0})$. Any proper centralizer
in $C_{i_0}$ has the form 
$C_{C_{i_0}}(X)=C_G(X)\cap C_{i_0}=C_G(X)\cap
C_G(A_{i_0})=C_G(X\cup A_{i_0})$ for some non-central subset $X$ of
$C_{i_0}$.  Therefore, $C_{C_{i_0}}(X)$ is virtually
abelian because of the minimality of $C_{i_0}$. So every proper centralizer
in $C_{i_0}$ is virtually abelian. Thus $C_{i_0}$ is virtually
abelian by Claim 2; which is a contradiction.

This contradiction shows that $\mathcal{F}$ is empty. Thus 
$G$ (which equals $C_G(1)$) is virtually abelian.

\begin{thm} Let $G$ be a periodic sub-icc group. Suppose that any two elements of $p$-power order generate a finite subgroup. Then the Sylow $p$-subgroups of $G$ are conjugate.\end{thm}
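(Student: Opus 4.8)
\medskip

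\noindent\textbf{Sketch of the argument.} The plan is to peel off from $G$ a locally finite normal subgroup that contains every $p$-subgroup of $G$, and then to invoke the Sylow theory of mc-groups. The first and main step is to show that the subgroup $L$ generated by all $p$-elements of $G$ is locally finite. Fix a $p$-element $x$, of order $p^{k}$ say. For every $g\in G$ the conjugate $x^{g}$ is again a $p$-element, so by hypothesis $\langle x,x^{g}\rangle$ is finite; thus the normal closure $\langle x^{G}\rangle$ is generated by the conjugacy class $x^{G}$, any two members of which generate a finite subgroup. Since $\langle(x^{p^{k-1}})^{G}\rangle\le\langle x^{G}\rangle$, and the image of $x$ in $\langle x^{G}\rangle/\langle(x^{p^{k-1}})^{G}\rangle$ is a $p$-element of strictly smaller order whose conjugates again pairwise generate finite subgroups, an induction on $k$ --- using that an extension of a locally finite group by a locally finite group is locally finite --- reduces the local finiteness of $\langle x^{G}\rangle$ to the case in which $x$ has order $p$. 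In that case the conclusion is a known fact about periodic groups generated by a conjugacy class of elements of prime order whose members pairwise generate finite subgroups; in any event it can be obtained from the minimal condition on centralizers, which holds because $G$ is sub-icc.

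Now $L$ is normal in $G$, a conjugate of a $p$-element being a $p$-element, and it is generated by the family $\{\langle x^{G}\rangle : x \text{ a } p\text{-element of } G\}$ of locally finite normal subgroups; since a subgroup generated by locally finite normal subgroups is itself locally finite, $L$ is locally finite. It is moreover characteristic, periodic, and, being a subgroup of $G$, sub-icc. I would then pass from $G$ to $L$: every $p$-subgroup of $G$ consists of $p$-elements and hence lies in $L$, so $G$ and $L$ have exactly the same $p$-subgroups and in particular the same maximal $p$-subgroups --- the Sylow $p$-subgroups, which exist by Zorn's lemma because a union of a chain of $p$-subgroups is again a $p$-subgroup. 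A conjugation inside $L$ is a conjugation inside $G$, so it suffices to prove the theorem for $L$; thus we may assume that $G$ is locally finite.

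With $G$ now locally finite and sub-icc (hence an mc-group), the conjugacy of its Sylow $p$-subgroups is part of the Sylow theory of mc-groups, and I would quote [5] for it --- the same source that supplies the first two assertions of Lemma~2.7. If a reduction to the literature is wanted: a maximal $p$-subgroup $P$ is a locally finite $p$-group, hence locally nilpotent, hence, by Lemma~2.4, soluble of derived length bounded by the length of the chains of centralizers of $G$; and passing modulo the Hirsch--Plotkin radical $A$ --- after which $PA/A$ is a Sylow $p$-subgroup of the mc-quotient $G/A$ and the Sylow $p$-subgroups of $PA$ are conjugate in $PA$, both again instances of the mc Sylow theory of [5] --- brings the conjugacy problem within reach of the cited results. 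Hence any two Sylow $p$-subgroups of $G$ are conjugate.

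I expect the first step --- extracting a \emph{locally finite} subgroup from the purely combinatorial hypothesis --- to be the real obstacle. The hypothesis, imposed only on a chosen generating set of $p$-elements, would not suffice for local finiteness: Euclidean triangle groups are generated by three involutions whose pairwise subgroups are finite, yet are infinite. So the chain condition built into being sub-icc must be used in an essential way at that point, whereas once $G$ has been reduced to a locally finite mc-group the remainder is bookkeeping together with the Sylow theory already invoked in the proof of Lemma~2.7.
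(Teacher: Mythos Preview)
The paper does not give a proof: it simply cites Poizat--Wagner in [6] and asserts that their argument for substable groups carries over verbatim to the sub-icc setting. So there is no detailed argument in the paper against which to compare yours; what can be assessed is whether your sketch stands on its own.

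Your strategy --- pass to the normal subgroup $L$ generated by all $p$-elements, show $L$ is locally finite, then invoke the Kegel--Wehrfritz Sylow theory for locally finite mc-groups --- is reasonable in outline, and the second half (once $L$ is locally finite) is unproblematic. The difficulty is exactly where you flag it, and you do not actually resolve it. The assertion that a periodic group generated by a conjugacy class of elements of prime order, any two of which generate a finite subgroup, is locally finite is not a standard theorem; you give no reference, and the fallback claim that ``it can be obtained from the minimal condition on centralizers'' is left entirely unsupported. Note too that $\langle x,x^{g}\rangle$ being finite does not make it nilpotent (two involutions can generate a dihedral group of any order), so Baer--Suzuki--type arguments about pairwise nilpotent generation do not apply directly. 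As written, this is a genuine gap, and your closing paragraph effectively concedes as much.

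There is also a structural issue with the induction on the order of $x$. If your base case really needs sub-icc (or even just mc), then the inductive step fails: the normal subgroup $N=\langle (x^{p^{k-1}})^{G}\rangle$ is not visibly relatively definable, so by Lemma~2.2 the quotient $\langle x^{G}\rangle/N$ need not be sub-icc, and you cannot reinvoke the hypothesis there. If, on the other hand, the base case is supposed to hold for arbitrary periodic groups, then you owe a reference or an argument, and none is given. Either way the induction does not close.

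In short: the paper defers entirely to [6], while your attempt identifies the right obstacle but does not overcome it; the local finiteness of $L$ is the whole content of the theorem, and your sketch does not establish it.
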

\begin{proof} This result is proven by Poizar-Wagner in the case of substable groups (see [6]), but the same proof goes through in the sub-icc case.\end{proof}

\section{Proof of thmrem 1.1}

For the first part note that 
the Hirsch--Plotkin radical $HP(G)$ is a locally nilpotent
mc-group. Hence $HP(G)$ is soluble by Lemma 2.4. 
By Lemma 2.5, there is a relatively
definable soluble supergroup $D$ of $HP(G)$ of the
same derived length as that of $HP(G)$ whose normalizer contains the
normalizer of $HP(G)$. So $D$ is a normal subgroup of $G$.  
By Lemma 2.2(2), the factor $G/D$ is a 
periodic locally soluble sub-icc group. Now by thmrem 2.14, 
for any prime $p$, the Sylow $p$-subgroups of
$G$ are conjugate as $G$ is locally finite and sub-icc.  
Since a locally finite group is soluble if and only if all its countable subgroups are soluble, we may assume that $G$ is countable, and by Lemma 2.7 
for any odd prime $p$, 
every $p$-subgroup of the factor $G/O_{p}(G)$ satisfies the descending chain 
condition on subgroups (note that 
this only holds for odd primes). Thus the 
homomorphic image $G/D$ also shares the same property.

Let $p$ denote an odd prime.  
The $p$-subgroups of $G/O_{p'}(G)$ also
satisfy the descending chain condition on subgroups.  
Therefore, the largest normal $p$-subgroup $O_{p'p}(G)/O_{p'}(G)$ of
$G/O_{p'}(G)$ satisfies the descending chain condition on subgroups.  
$O_{p'p}(G)/O_{p'}(G)$ is the
homomorphic image of a $p$-subgroup $P$ of $G$.  But $P$
satisfies the minimal condition on subgroups. Applying thmrem 2.13 we
deduce that $P$ is virtually abelian. Hence 
$O_{p'p}(G)/O_{p'}(G)$ is virtually abelian. Therefore, the 
finite index abelian normal subgroup $A$ of $O_{p'p}(G)/O_{p'}(G)$ 
is a Cernikov group. Hence, by thmrem 2.12 (3), $A$ contains 
a minimal divisible abelian characteristic (in $A$) 
subgroup, say $H_p/O_{p'}(G)$, of finite index which is clearly also 
normal and of finite index in $O_{p'p}(G)/O_{p'}(G)$.  
But by thmrem 2.11, 
$O_{p'p}(G)$ has finite index in $G$, and this implies that the
pre-image $H_p$ is a normal subgroup of finite index in $G$ such that 
$H_p/O_{p'}(G)$ is
divisible abelian for any odd prime $p$.

Let $I$ denote the intersection of all the $H_p$'s, where
$p$ ranges over all odd primes.  Any $H_p$ has finite index in
$G$ and hence contains all the divisible subgroups of $G$.  So, all
the divisible subgroups of $G$ are contained in the intersection
$I$.  We now claim that this shows that all the Sylow $p$-subgroups
of $G/I$ are finite for any odd prime $p$.

Let $p$ be an odd prime. Let $P$ be a Sylow $p$-subgroup of $G/I$.  
$P$ is the homomorphic image of a Sylow $p$-subgroup 
$P^*$ of $G$.  

For any odd prime $p$, any Sylow $p$-subgroup of $G$ 
satisfies the descending chain condition on subgroups, hence is 
virtually abelian by thmrem 2.10(3) (as it is locally nilpotent 
since it is locally finite). So for any odd prime $p$, any Sylow 
$p$-subgroup of $G$ is Cernikov, whence is divisible-by-finite by 
thmrem 2.12(3).  

So $P^*$ is divisible-by-finite. But $I$ contains all the divisible
subgroups of $G$. Hence $P^*\cap I$ has finite index in $P^*$.  
As $P$ equals $P^*I/I$, it must be finite.  
Therefore, for any odd prime $p$, any Sylow
$p$-subgroup of $G/I$ is finite.

The intersection over all odd primes $\bigcap_{p\neq 2} O_{p'}(G)$ 
is a normal $2$-subgroup of $G$ and hence is contained in the maximal
normal $2$-subgroup $O_2(G)$.  But for any odd prime $p$, the factor
$H_p/O_{p'}(G)$ is abelian.  Thus the derived group $I'$ is contained
in $\bigcap_{p\neq 2} O_{p'}(G)\leq O_2(G)$.  Therefore the factor
$I/O_2(G)$ is abelian.  The
$2$-group $O_2(G)$ is a locally nilpotent mc-group.  So 
$O_2(G)$ is soluble by
Proposition 2.4. 
Thus $I$ is soluble.  By Lemma 2.5, 
$I$ is contained in a relatively definable soluble subgroup $W$ (of
$G$) such that $N_G(I)\leq N_G(W)$. The
quotient $G/W$ is periodic locally soluble, and is also a sub-icc
group by Lemma 2.2(2).

As $G$ is locally (finite soluble), it must be a countable 
union of finite soluble subgroups $\bigcup_{i\in \omega}
S_i$. Thus for any prime $p$
and any $i$ there is a Sylow $p$-subgroup $S_{i,p}$ of $S_i$ (so-called 
Sylow basis) such that
for a any fixed $i_0$ the set of all $S_{i_0,p}$ for different primes $p$
form a Sylow system for the finite soluble group $S_{i_0}$ 
(i.e.\ $S_{i_0}$ is
generated by the Sylow $p$-subgroups $S_{i_0,p}$, and for different primes
$p_1$ and $p_2$ we have
$S_{i_0,p_1}S_{i_0,p_2}=S_{i_0,p_2}S_{i_0,p_1}$), and for a fixed
prime $p_0$ the set of all $S_{i,p_0}$ is an ascending chain whose
union $\widehat{S_p}:=\bigcup_{i\in \omega} S_{i,p}$ is a 
Sylow $p$-subgroup of $G$.  
Hence, for different primes $p_0$ and $p_1$ we have
$S_{p_1}S_{p_2}=S_{p_2}S_{p_1}$. 
Each $S_i$ is generated by its (pairwise ``permutable but not commuting'') 
Sylow $p$-subgroups $S_{i,p}$, for 
all primes $p$. Consequently, $G$ is generated by its pairwise
permutable Sylow $p$-subgroups $S_{p}$, for all primes $p$.  

Thus, we can write $G$ as an ascending union (over
all the primes $p$) of the
subgroups $U_p$, where $U_p$ is the subgroup generated by all the
$G$-Sylow subgroups $\widehat{S_2},\widehat{S_3},\dots,\widehat{S_p}$.  
Each $U_p$ in turn can be
written as an ascending union (over all positive integers $i$) of the
subgroups $V_{i,p}$ where $V_{i,p}$ is the subgroup generated by all the
$S_i$-Sylow subgroups $S_{i,2},S_{i,3},\dots,S_{i,p}$. Note that each $V_{{i,p}}$ is a
$\{2,3,\dots,p\}$-group, and hence so is each $U_p$.  

Let $n$ denote a bound on the lengths of centralizers.  
We have $\widehat{S_2} \leq U_p \leq G$ for any $p$.  The
Sylow $2$-subgroup $\widehat{S_2}$ is a locally nilpotent sub-icc-group, and hence
is $n$-soluble by Lemma 2.4. 
For any prime $p$, the maximal normal $2$-subgroup $O_2(U_p)$ of 
$U_p$ is the
intersection of all the Sylow $2$-subgroups of $U_p$ 
Consequently, $O_2(U_p)$ is contained in
$\widehat{S_2}$ for any prime $p$. Hence $O_2(U_p)$ 
is $n$-soluble for any prime $p$.  

Let $\Pi$ denote the set of all primes. By Lemma 2.5, 
there is a uniformly relatively definable family of $n$-soluble
subgroups $\{D_p\}_{p\in \Pi}$, where $D_p$ contains $O_2(U_p)$ and
$N_G(D_p)\geq N_G(U_p)$. Hence, by
the icc chain condition, there are finitely many primes
$p_1,\dots,p_m$ such that the intersection $I:=\widehat{S_2}\cap
\bigcap_{p\in \Pi} D_p$ is equal to the finite sub-intersection 
$\widehat{S_2}\cap \bigcap_{p\in \{p_1,\dots,p_m\}} D_p$.  

Consider any $U_p$, and let 
$\{2,p_1,\dots,p_k\}$ be the prime divisors of the orders of its 
elements.  By the second part, the $p_j$-Sylow subgroups of $U_p$ are
finite for $j\in \{1,\dots,k\}$.  But, as $U_p$ is periodic locally
soluble, it cannot involve infinite simple locally finite groups 
by thmrem 2.10. 
Thus, $U_p$ has a series $\{T_i:i<\omega\}$ of
subgroups, such that $T_i$ is normal in $T_{i+1}$ and $T_{i+1}/T_i$ is
finite for any $i<\omega$. So by thmrem 2.9, 
$U_p$ contains a normal $p_{j}'$-subgroup $F_j$ of finite index.
Hence, $O_2(U_p)$ has finite index in $U_p$ since 
$U_p$ contains a normal $2$-subgroup of finite index, namely, 
the intersection $F_1\cap \dots \cap F_{p_k}$.  Therefore, each $D_p$
and thus $I$ have finite index in $\widehat{S_2}$.  

Now we can write
the normal closure $I^G$ as a union of normal closures 
$\bigcup_{p\in \Pi} I^{U_p}$. But for any prime $p$, 
since $O_2(U_p)$ is
normal in $U_p$ and $N_G(D_p)\geq N_G(U_p)$, we deduce that 
$I^{U_p}$ is contained in $D_p$, and so is 
$n$-soluble. As a union of $n$-soluble subgroups, 
$I^G$ is also $n$-soluble. By Lemma 2.5, there is a
relatively definable $n$-soluble subgroup $J$ containing
$I^G$.  But the factor $G/J$ is periodic locally soluble

sub-icc by Lemma 2.2.  By Lemma 2.7, the group 
$\widehat{S_2}J/J$ is a Sylow $2$-subgroup of $G/J$. By the sub-icc chain
condition, $I$ has
finite index in $\widehat{S_2}$, so $\widehat{S_2}J/J$ is finite.  
Therefore, all the other Sylow $2$-subgroups of $G/J$ are finite as well.

To complete the proof we note that as a 
homomorphic image of $G$ for odd
primes $p$, the $p$-subgroups of $G/J$ have the descending chain
condition on subgroups. But since the Sylow $2$-subgroups of $G/J$ are finite,
we conclude that for all primes $p$, the $p$-subgroups of $G/J$
have the descending chain condition on subgroups. Now 
$G/J$ is virtually abelian. The proof is complete.

%\bibliographystyle{acm}
%\bibliography{bibsol}

\

\centerline{References}

\

[1] Bryant, R.M. Groups with the minimal condition on centralizers, {\it J. Algebra 60}, 2 (1079), 371-383.

[2] Cernikov, S. N. Infinite locally finite groups with finite Sylow subgrous, {\it Mat. Sb. (N.S.) 52 (94)} (1960), 647-652.

[3] Derakhshan, J., and Wagner, F.O. Nilpotency in groups with chain conditions. {\it Quart. J. Math. Oxford Ser (2) 48}, 192 (1997), 453-466.

[4] Hartley, B. Sylow {\it p}-subgroups and local {\it p}-solubility. {\it J. Algebra 23} (1972), 347-369.

[5] Kegel, O.H., and Wehrfritz, B.A.F. {\it Locally finite groups.} North-Holland Publishing Co., Amsterdam-London; American Elsevier Publishing Co., Inc. New York, 1973. North-Holland Mathematical Library, Vol. 3.

[6] Wagner, F.O. {\it Stable groups}, London Mathematical Society Lecture Note Series, {\it 240}, 
Cambridge University Press, Cambridge, (1997).

\end{document}